\providecommand{\tabularnewline}{\\}
\numberwithin{equation}{section}
\numberwithin{figure}{section}
\theoremstyle{plain}
\newtheorem{thm}{\protect\theoremname}[section]
  \theoremstyle{definition}
  \newtheorem{example}[thm]{\protect\examplename}
  \theoremstyle{plain}
  \newtheorem{lem}[thm]{\protect\lemmaname}
  \theoremstyle{remark}
  \newtheorem{rem}[thm]{\protect\remarkname}
  \theoremstyle{definition}
  \newtheorem{defn}[thm]{\protect\definitionname}
  \theoremstyle{plain}
  \newtheorem{prop}[thm]{\protect\propositionname}
  \theoremstyle{plain}
  \newtheorem{question}[thm]{\protect\questionname}
  \theoremstyle{plain}
  \newtheorem{cor}[thm]{\protect\corollaryname}
  \newlength{\BiblioSpacing}
  \providecommand{\corollaryname}{Corollary}
  \providecommand{\definitionname}{Definition}
  \providecommand{\examplename}{Example}
  \providecommand{\lemmaname}{Lemma}
  \providecommand{\propositionname}{Proposition}
  \providecommand{\questionname}{Question}
  \providecommand{\remarkname}{Remark}
\providecommand{\theoremname}{Theorem}
\begin{document}
\global\long\def\st{\operatorname{star}}

\global\long\def\link{\operatorname{link}}

\global\long\def\disjointunion{\mathbin{\dot{\cup}}}

\global\long\def\alexdual#1{#1^{\vee}}

\title{Chordal and sequentially Cohen-Macaulay clutters}

\author{Russ Woodroofe\\
{\small Department of Mathematics}\\
{\small Washington University in St. Louis}\\
{\small St. Louis, MO, 63130}\\
\texttt{\small russw@math.wustl.edu}}

\date{
{\small Mathematics Subject Classifications: 05E45, 13F55, 13C14,
05C65.}}
\maketitle
\begin{abstract}
We extend the definition of chordal from graphs to clutters. The resulting
family generalizes both chordal graphs and matroids, and obeys many
of the same algebraic and geometric properties. Specifically, the
independence complex of a chordal clutter is shellable, hence sequentially
Cohen-Macaulay; and the circuit ideal of a certain complement to such
a clutter has a linear resolution. Minimal non-chordal clutters are
also closely related to obstructions to shellability, and we give
some general families of such obstructions, together with a classification
by computation of all obstructions to shellability on 6 vertices. 
\end{abstract}

\section{\label{sec:Introduction}Introduction}

A \emph{clutter} $\mathcal{C}$ is a hypergraph such that no edge
of $\mathcal{C}$ is properly contained in any other edge. For example,
any graph is a clutter, as is any $d$-uniform hypergraph. There is
a dual relationship between simplicial complexes and clutters, as
follows: Given any clutter $\mathcal{C}$, there is an \emph{independence
complex} $I(\mathcal{C})$ which has faces consisting of all subsets
of $V(\mathcal{C})$ containing no edge of $\mathcal{C}$. Given any
simplicial complex $\Delta$, there is a \emph{non-face clutter} $\mathcal{C}(\Delta)$
on the same vertex set with edges consisting of the minimal subsets
of $V(\Delta)$ which are not faces. Clearly $I(\mathcal{C}(\Delta))=\Delta$
and $\mathcal{C}(I(\mathcal{C}))=\mathcal{C}$.

The non-face clutter of $\Delta$ is perhaps most familiar via the
\emph{Stanley-Reisner ring} of $\Delta$:
\[
R[\Delta]\triangleq R[x_{1},\dots,x_{n}]/\left(x_{i_{1}}x_{i_{2}}\cdots x_{i_{k}}\,:\,\{x_{i_{1}},\dots,x_{i_{k}}\}\mbox{ an edge of }\mathcal{C}(\Delta)\right),
\]
where $V(\Delta)=\{x_{1},\dots,x_{n}\}$. The ideal in this construction
is known as the \emph{edge ideal} or \emph{circuit ideal} of $\mathcal{C}(\Delta)$.

Recently, a number of papers \cite{Dochtermann/Engstrom:2009,Ehrenborg/Hetyei:2006,Francisco/VanTuyl:2007,Ha/VanTuyl:2008,VanTuyl/Villarreal:2008,Woodroofe:2009a}
have asked what one can say about the algebraic and topological combinatorics
of $\Delta$ from the structure of $\mathcal{C}=\mathcal{C}(\Delta)$.
A particularly successful case has been that where $\mathcal{C}=G$
is a chordal graph. In this case, the independence complex $I(G)$
is vertex decomposable \cite{Dochtermann/Engstrom:2009,Woodroofe:2009a},
hence shellable \cite{VanTuyl/Villarreal:2008} and sequentially Cohen-Macaulay
\cite{Francisco/VanTuyl:2007}, while the edge ideal of the complement
$\overline{G}$ has a linear resolution \cite{Froberg:1990}. Moreover,
the chordal graphs are closely related to the largest family of graphs
having independence complexes such that every induced subcomplex is
shellable \cite{Woodroofe:2009a}; and the complements of chordal
graphs are exactly the graphs with edge ideal having a linear resolution
\cite{Froberg:1990}.

In the current paper, our purpose is to introduce a family of clutters,
which we call \emph{chordal clutters}, which satisfy similar properties.
Chordal clutters generalize several previously studied families, including
chordal graphs, circuit clutters of matroids, and acyclic clutters.
We will prove:
\begin{thm}
\label{thm:ChordalClutterShellable}If $\mathcal{C}$ is a chordal
clutter then the independence complex $I(\mathcal{C})$ is shellable
and hence sequentially Cohen-Macaulay.
\end{thm}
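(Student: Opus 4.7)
The plan is to strengthen the conclusion and prove that $I(\mathcal{C})$ is in fact vertex decomposable whenever $\mathcal{C}$ is chordal; this implies shellability, and shellability implies the sequentially Cohen-Macaulay property by the standard result of Stanley. The argument will proceed by induction on the number of vertices of $\mathcal{C}$.

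Since \emph{chordal clutter} is being introduced in this paper, I anticipate a recursive definition modeled on perfect elimination orderings for chordal graphs: a clutter $\mathcal{C}$ should be chordal if there is a distinguished vertex $v$ (the analogue of a simplicial vertex) such that two smaller clutters---informally, the deletion $\mathcal{C}\setminus v$ and a link-type clutter built from the edges of $\mathcal{C}$ containing $v$---are again chordal, with empty clutters and complete clutters serving as the base cases. The first step of my proof will be to translate this recursion into the simplicial world by showing that the independence complex of $\mathcal{C}\setminus v$ equals the deletion $I(\mathcal{C})\setminus v$, and that the independence complex of the link-clutter equals $\link_{I(\mathcal{C})}(v)$. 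With this translation in hand, the inductive hypothesis immediately yields vertex decomposability of both $\link_{I(\mathcal{C})}(v)$ and $I(\mathcal{C})\setminus v$.

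It then remains to verify the shedding condition: that no facet of $\link_{I(\mathcal{C})}(v)$ is a facet of $I(\mathcal{C})\setminus v$. This should follow from the particular choice of $v$ as a ``simplicial'' vertex in the chordal sense---roughly, every maximal independent set avoiding $v$ should be extendable by $v$, unless forbidden by an edge of $\mathcal{C}$ through $v$ that has already lost another vertex. Controlling this will require a careful analysis of how edges of $\mathcal{C}$ meeting $v$ restrict facets of $I(\mathcal{C})$, and this is where matroid circuit clutters and chordal graphs, two listed examples, should both satisfy the condition by well-known means (the exchange axiom in one case, the clique structure of the neighborhood in the other).

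The main obstacle I expect is precisely this translation between the clutter-theoretic recursion and the simplicial notion of a shedding vertex: in the hypergraph setting, deleting the vertex $v$ from an edge of $\mathcal{C}$ can leave behind a smaller set that may or may not become a new edge of the link-clutter, and tracking these correctly is subtler than in graphs where edges have only two endpoints. Once the link/deletion bookkeeping is settled, and the shedding condition is extracted from the chordal definition, the induction should close without further difficulty, giving vertex decomposability, hence shellability, hence the sequentially Cohen-Macaulay conclusion.
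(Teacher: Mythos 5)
Your plan hinges on the claim that the simplicial vertex $v$ itself is a shedding vertex of $I(\mathcal{C})$, i.e.\ that no facet of $\link_{I(\mathcal{C})}v$ is a facet of $I(\mathcal{C})\setminus v$. This is false, and it already fails for chordal graphs. Take the path $v\!-\!a\!-\!b$ viewed as a clutter with circuits $\{v,a\}$ and $\{a,b\}$. Then $v$ is simplicial (in fact a free vertex), $\link_{I(\mathcal{C})}v$ has the single facet $\{b\}$, and $I(\mathcal{C})\setminus v$ (the independence complex of the one-edge graph on $\{a,b\}$) has facets $\{a\}$ and $\{b\}$; so $\{b\}$ is a facet of both, and $v$ is \emph{not} a shedding vertex. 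The vertex you should shed is the neighbor $a$, not $v$. The same phenomenon occurs for isolated vertices of $\mathcal{C}$, and more generally the simplicial vertex guaranteed by chordality is essentially never the vertex one sheds.

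The paper circumvents this by shedding not a vertex but a whole face: for each circuit $e$ through $v$ it shows (via the ``neighborhood containment'' Lemma~\ref{lem:NbhdContainment}) that $\sigma = e\setminus\{v\}$ is a shedding face of $I(\mathcal{C})$, and then in Proposition~\ref{pro:SimplicialAndContractionsGivesShellable} it sheds all these faces $e_1\setminus\{v\},\dots,e_k\setminus\{v\}$ in succession, after which $v$ becomes isolated and one cones off $I(\mathcal{C}/v)$. This yields $(k-2)$-decomposability where $k$ is the maximum circuit cardinality, not vertex decomposability; in the graph case $e\setminus\{v\}$ is a single vertex, recovering the known vertex decomposability result for chordal graphs, but for larger circuits one genuinely needs the higher-dimensional shedding machinery developed in Section~\ref{sec:k-decomposableComplexes}. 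Your translation of deletion and contraction to $I(\mathcal{C})\setminus v$ and $\link_{I(\mathcal{C})}v$ is correct and is used by the paper, but the inductive step as you describe it cannot close because the wrong object is being shed; whether $I(\mathcal{C})$ is always vertex decomposable for chordal $\mathcal{C}$ is, as far as the paper shows, open.
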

In particular, we obtain a uniform proof of shellability for independence
complexes of both chordal graphs and matroids.

We also prove:
\begin{thm}
\label{thm:ClutterLinearRes}Let $\mathcal{C}$ be a $d$-uniform
chordal clutter. Then the circuit ideal of the complement $d$-uniform
clutter of $\mathcal{C}$ has a linear resolution.
\end{thm}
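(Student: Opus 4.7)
The plan is to reduce to a Cohen--Macaulayness question via the Eagon--Reiner theorem, and then extract that Cohen--Macaulayness from the chordal structure of $\mathcal{C}$. The circuit ideal of $\overline{\mathcal{C}}$ is precisely the Stanley--Reisner ideal of $I(\overline{\mathcal{C}})$, so Eagon--Reiner tells us that it has a linear resolution if and only if the Alexander dual $\alexdual{I(\overline{\mathcal{C}})}$ is Cohen--Macaulay. Because $\overline{\mathcal{C}}$ is $d$-uniform, every facet of $\alexdual{I(\overline{\mathcal{C}})}$ has the form $V\setminus E$ for a non-edge $E$ of $\mathcal{C}$ of size $d$, so this dual complex is pure of dimension $|V|-d-1$, and (pure) shellability suffices to conclude.

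My first attempt would construct a shelling of $\alexdual{I(\overline{\mathcal{C}})}$ in direct parallel with the proof of Theorem~\ref{thm:ChordalClutterShellable}: use the inductive nature of the chordal definition to locate a ``simplicial'' vertex or edge of $\mathcal{C}$, order the facets $V\setminus E$ so that those arising from non-edges through the simplicial element appear in a controlled block, and induct on the chordal clutter obtained by deletion. A closely related alternative is to show that $I_{\overline{\mathcal{C}}}$ has linear quotients: order the non-edges $E_{1},E_{2},\dots$ of $\mathcal{C}$ so that for every $j<i$ and every $\ell\in E_{j}\setminus E_{i}$ there is some earlier non-edge of the form $(E_{i}\setminus\{m\})\cup\{\ell\}$. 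This is an exchange/elimination condition on non-edges that one expects the chordal hypothesis to supply essentially by definition, and linear quotients imply a linear resolution directly.

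The principal obstacle I anticipate is that Theorem~\ref{thm:ChordalClutterShellable} gives a shelling of $I(\mathcal{C})$, while Theorem~\ref{thm:ClutterLinearRes} concerns the Alexander dual of the genuinely different complex $I(\overline{\mathcal{C}})$. For $d\ge 3$ the two complexes are not related by a clean Alexander-duality statement, so the earlier shelling cannot be imported wholesale; the chordal structure must be exploited a second time. The technical heart of the argument is thus the translation of chordality into either the explicit shelling order on the facets $V\setminus E$ of $\alexdual{I(\overline{\mathcal{C}})}$, or equivalently the exchange-type ordering on the non-edges of $\mathcal{C}$ that witnesses linear quotients.
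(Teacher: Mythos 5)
Your high-level plan matches the paper's: reduce via Eagon--Reiner to showing $\alexdual{I(\overline{\mathcal{C}})}$ is Cohen--Macaulay, observe purity from $d$-uniformity, and then build a shelling (in fact a vertex decomposition) by exploiting the simplicial-vertex characterization of chordality directly on the dual complex. You are also right that the shelling of $I(\mathcal{C})$ from Theorem~\ref{thm:ChordalClutterShellable} cannot be imported and that chordality must be used a second time.

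The gap is in your treatment of the inductive step. A shelling or vertex-decomposability induction on $\Gamma = \alexdual{I(\overline{\mathcal{C}})}$ must control both $\link_{\Gamma} v$ and $\Gamma\setminus v$ for the chosen shedding vertex $v$. The link branch is clean: by Alexander duality $\link_{\Gamma} v = \alexdual{I(\overline{\mathcal{C}}\setminus v)}$, and $\overline{\mathcal{C}}\setminus v = \overline{\mathcal{C}\setminus v}$, so you stay inside the chordal $d$-uniform world. The deletion branch is the problem: $\Gamma\setminus v = \alexdual{I(\overline{\mathcal{C}}/v)}$, and contracting the complement at $v$ does not obviously yield the complement of any $d$-uniform chordal clutter. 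What is needed --- and this is the technical heart the proposal leaves unaddressed --- is a lemma asserting that, precisely because $v$ is a simplicial vertex of $\mathcal{C}$, contracting $v$ in the $d$-th complement of $\mathcal{C}$ gives the $(d-1)$-th complement of $\mathcal{C}/v$. Since $\mathcal{C}/v$ is in general not uniform even when $\mathcal{C}$ is, you must further recast the whole argument using a complement operation $c_d$ defined on arbitrary clutters (taking the $d$-subsets that are not circuits) and prove the stronger statement for all chordal clutters of minimum circuit cardinality $d$; only then does the induction on $d$ close. You would also need to verify that a simplicial vertex is genuinely a shedding vertex of $\Gamma$ (with care in the degenerate case where every $d$-non-circuit contains $v$). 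As written, ``induct on the chordal clutter obtained by deletion'' covers only the link branch and does not constitute a proof.
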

As previously mentioned, there is a converse to Theorem \ref{thm:ClutterLinearRes}
and an interesting partial converse to Theorem \ref{thm:ChordalClutterShellable}
in the case where $\mathcal{C}$ is a graph. We discuss the possibility
of finding such converse results for general chordal clutters. We
relate Theorem \ref{thm:ChordalClutterShellable} to obstructions
to shellability via both examples and computational results, but conclude
that non-trivial description of circuit ideals having linear resolution
or linear quotients (i.e., a converse to Theorem \ref{thm:ClutterLinearRes})
is unlikely. 

\medskip{}

The structure of the papers is as follows. In Section \ref{sec:Background}
we present the background material. In Section \ref{sec:k-decomposableComplexes}
we define $k$-decomposability for non-pure simplicial complexes,
and extend many theorems proved by Provan and Billera \cite{Provan/Billera:1980}
for pure complexes. In Section \ref{sec:ChordalClutters} we define
simplicial vertices in clutters, which naturally leads us to define
chordal clutters. We give several examples, including chordal graphs
and the circuit clutters of matroids. In Section \ref{sec:Shellability}
we prove that the independence complex of any chordal clutter is shellable,
as a special case of a more general result about shellability of independence
complexes. In Section \ref{sec:LinearResolutions} we recall the basic
facts about linear resolutions, combinatorial Alexander duality, and
their relationship. We then use the Alexander dual to prove that the
circuit ideal of a certain uniform complement of a chordal clutter
has a linear resolution, and indeed has linear quotients. We close
in Section \ref{sec:ForbiddenMinors} by relating forbidden minors
to chordality with obstructions to shellability. We give several infinite
families of these forbidden minors, and characterize by computation
with GAP \cite{GAP4.4.12} all obstructions to shellability on $6$
or fewer vertices that contain no non-shellable link.

\subsection{Notation}

We will use letters $\mathcal{C}$ and $\mathcal{D}$ for clutters,
except in the special case where the clutter is a graph, when we defer
to convention and use $G$. Other calligraphic letters such as $\mathcal{F}$
will denote families of objects. Vertices in clutters will be denoted
by $v$, $w$, etc; while circuits (edges) will be denoted with the
letter $e$. We use upper case Greek letters such as $\Delta$ and
$\Sigma$ for simplicial complexes. Inside a simplicial complex, we
use letters $v$ and $w$ for vertices, and lower case Greek letters
such as $\sigma$ and $\tau$ for faces. \vspace{-0.1in}

\subsection*{Acknowledgements}

Eric Emtander explained several aspects of his (alternative) definition
of chordal clutter. Volkmar Welker pointed out the connection between
$c$-obstructions and Buchsbaum complexes. I have benefited greatly
from the interest and advice of John Shareshian.

\section{\label{sec:Background}Background}

\subsection{Simplicial complexes and clutters}

An \emph{abstract simplicial complex} $\Delta$ on a vertex set $V$
is a set of subsets of $V$ (called \emph{faces} of $\Delta$) such
that each subset of a face of $\Delta$ is itself a face of $\Delta$.
We do not require that singleton subsets (i.e. elements) of $V$ are
faces of $\Delta$. A maximal face is called a \emph{facet}, and a
$d$-dimensional face (having cardinality $d+1$) is called a $d$-face.

The \emph{join} of two simplicial complexes $\Delta_{1}$ and $\Delta_{2}$
on disjoint vertex sets $V_{1}$ and $V_{2}$ is a complex $\Delta_{1}*\Delta_{2}$
on vertex set $V_{1}\cup V_{2}$ with faces $\{\sigma_{1}\cup\sigma_{2}\,:\,\sigma_{i}\mbox{ a face of }\Delta_{i}\}$.

A \emph{clutter} $\mathcal{C}$ on a vertex set $V$ is a set of subsets
of $V$ (called \emph{circuits }or \emph{edges} of $\mathcal{C}$)
such that if $e_{1}$ and $e_{2}$ are distinct circuits of $\mathcal{C}$
then $e_{1}\not\subset e_{2}$. Clutters have also been referred to
in the literature as \emph{Sperner families}, or as \emph{antichains
of sets}. To avoid confusion with $1$-faces of a simplicial complex,
we will usually prefer the term ``circuit'' over ``edge''. A $d$-circuit
is a circuit consisting of exactly $d$ vertices, and a clutter is
\emph{$d$-uniform} if every circuit has exactly $d$ vertices. The
\emph{maximum circuit cardinality} of $\mathcal{C}$ is the largest
cardinality of any circuit in $\mathcal{C}$, and similarly for the
\emph{minimum circuit cardinality}.

An \emph{independent set }of $\mathcal{C}$ is a subset of $V$ containing
no circuit.\emph{ }Clutters and simplicial complexes are linked via
the independence complex $I(\mathcal{C})=\{\sigma\subseteq V\,:\,\sigma\mbox{ is an independent }\linebreak\mbox{set of }\mathcal{C}\}$,
and via the clutter of minimal non-faces $\mathcal{C}(\Delta)$. As
previously mentioned we have $\mathcal{C}(I(\mathcal{C}))=\mathcal{C}$
and $I(\mathcal{C}(\Delta))=\Delta$. 

There are two degenerate simplicial complexes on $V$: the simplicial
complex $\{\}$ with no faces, and the simplicial complex $\{\emptyset\}$
with only the empty set as a face. Notice that $\mathcal{C}(\{\})=\{\emptyset\}$,
while $\mathcal{C}(\{\emptyset\})=\{\{v\}\,:\, v\in V\}$.

Nondegenerate simplicial complexes admit a geometric realization,
a geometric simplicial complex with the same face incidences. When
we use geometric or topological terms such as dimension or homotopy
type to discuss simplicial complexes, we are referring to the geometric
realization.

All clutters and simplicial complexes considered are finite.

\subsection{Deletions and contractions, links}

Given a simplicial complex $\Delta$, two kinds of subcomplex are
of particular interest. If $\sigma$ is a face of $\Delta$, then
$\Delta\setminus\sigma$ is obtained from $\Delta$ by removing all
faces that contain $\sigma$ from the set system. The \emph{star }of
$\sigma$ is $\st_{\Delta}\sigma=\{\mbox{faces containing }\sigma\}$,
and the \emph{link} of $\sigma$ is the simplicial complex on vertex
set $V(\Delta)\setminus\sigma$ with faces 
\[
\link_{\Delta}\sigma=\{\tau\,:\,\sigma\cap\tau=\emptyset,\sigma\cup\tau\mbox{ is a face of }\Delta\},
\]

Thus $\link_{\Delta}\sigma$ is $\st_{\Delta}\sigma$ with all vertices
of $\sigma$ deleted, while $\st_{\Delta}\sigma=(\link_{\Delta}\sigma)*\sigma$.

Given a clutter $\mathcal{C}$, there are two ways of removing a vertex
that are of interest. Let $v\in V(\mathcal{C})$. The \emph{deletion}
$\mathcal{C}\setminus v$ is the clutter on the vertex set $V(\mathcal{C})\setminus\{v\}$
with circuits $\{e\,:\, e\mbox{ a circuit of }\mathcal{C}\mbox{ with }v\notin e\}$.
The \emph{contraction} $\mathcal{C}/v$ is the clutter on the vertex
set $V(\mathcal{C})\setminus\{v\}$ with circuits the minimal sets
of $\{e\setminus\{v\}\,:\, e\mbox{ a circuit of }\mathcal{C}\}$.
Thus, $\mathcal{C}\setminus v$ deletes all circuits containing $v$,
while $\mathcal{C}/v$ removes $v$ from each circuit containing it
(and then removes any redundant circuits). 

A clutter $\mathcal{D}$ obtained from $\mathcal{C}$ by repeated
deletion and/or contraction is called a \emph{minor} of $\mathcal{C}$.
If $\mathcal{D}$ is obtained only by deletions we call it an \emph{induced
subclutter} on vertex set $V(\mathcal{D})$. If $\mathcal{D}$ is
obtained only by contractions we call it a \emph{contraction} of $\mathcal{C}$.
Notice that if all the vertices contained in a circuit are contracted,
then what remains is the clutter $\{\emptyset\}$. It is straightforward
to prove that if $v\neq w$ are vertices then $(\mathcal{C}\setminus v)\setminus w=(\mathcal{C}\setminus w)\setminus v$,
$(\mathcal{C}/v)/w=(\mathcal{C}/w)/v$, and $(\mathcal{C}\setminus v)/w=(\mathcal{C}/w)\setminus v$.
\begin{example}
\label{exa:MatroidClutter} A clutter $\mathcal{C}$ is a \emph{matroid
circuit clutter} if it satisfies the \emph{weak circuit exchange property}:
that if $e_{1}$ and $e_{2}$ are circuits of $\mathcal{C}$ containing
a common vertex $v$, then there is some circuit $e_{3}$ such that
$e_{3}\subseteq(e_{1}\cup e_{2})\setminus\{v\}$. In this case the
deletion and contraction operations $\mathcal{C}\setminus v$ and
$\mathcal{C}/v$ are the usual deletion and contraction in a matroid
without loops or coloops. See \cite{Oxley:1992} for additional background
and other definitions of matroids. Duval \cite{Duval:2005} also has
a discussion of the relationship between minors in matroids and clutters/simplicial
complexes.
\end{example}
We collect the relationships between simplicial complex operations
and clutter operations in the following lemma.
\begin{lem}
Let $\mathcal{C},\mathcal{D}$ be clutters, and let $v$ be a vertex
of $V(\mathcal{C})$. Then 
\begin{enumerate}
\item $I(\mathcal{C}/v)=\link_{I(\mathcal{C})}v$.
\item $I(\mathcal{C}\setminus v)=I(\mathcal{C})\setminus v$, considered
as a simplicial complex on $V(\mathcal{C})\setminus v$.
\item If $\mathcal{C}'$ consists of the minimal sets of $\mathcal{C}\cup\{\sigma\}$
(where $\sigma$ is an independent set), then $I(\mathcal{C}')=I(\mathcal{C})\setminus\sigma$.
\item $I(\mathcal{C}\disjointunion\mathcal{D})=I(\mathcal{C})*I(\mathcal{D})$.
\end{enumerate}
\end{lem}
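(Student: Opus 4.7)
The plan is to verify each of the four equalities directly from the definitions; the only recurring technical point is a single observation that handles the ``minimal sets'' operations appearing in parts (1) and (3). That observation is: for any family $\mathcal{F}$ of subsets of $V$, a set $S \subseteq V$ contains some member of $\mathcal{F}$ if and only if $S$ contains some minimal member of $\mathcal{F}$. I would state this at the outset and then invoke it when needed.

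For part (1), I would show both sides equal the collection of $\tau \subseteq V(\mathcal{C}) \setminus \{v\}$ such that $\tau \cup \{v\}$ contains no circuit of $\mathcal{C}$. The equality with $\link_{I(\mathcal{C})} v$ is immediate from the definition of link, rewriting ``$\{v\} \cup \tau$ is a face of $I(\mathcal{C})$'' as ``$\{v\} \cup \tau$ is independent.'' For $I(\mathcal{C}/v)$, the minimality observation lets one replace ``$\tau$ contains a circuit of $\mathcal{C}/v$'' by ``$\tau$ contains some $e \setminus \{v\}$ for a circuit $e$ of $\mathcal{C}$,'' and since $v \notin \tau$ this is in turn equivalent to $\tau \cup \{v\} \supseteq e$ for some circuit $e$.

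For part (2), I would use that the circuits of $\mathcal{C} \setminus v$ are exactly the circuits of $\mathcal{C}$ not containing $v$; since no subset of $V(\mathcal{C}) \setminus \{v\}$ can contain a circuit through $v$, independence with respect to $\mathcal{C}\setminus v$ and with respect to $\mathcal{C}$ coincide on $V(\mathcal{C}) \setminus \{v\}$, which is exactly what it means to lie in $I(\mathcal{C}) \setminus v$. Part (3) is similar: the minimality observation reduces ``$\tau$ contains an element of $\mathcal{C}'$'' to ``$\tau$ contains some element of $\mathcal{C} \cup \{\sigma\}$,'' i.e.\ $\tau \notin I(\mathcal{C})$ or $\tau \supseteq \sigma$; negating gives the defining condition of a face of $I(\mathcal{C}) \setminus \sigma$.

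For part (4), I would decompose any $\tau \subseteq V(\mathcal{C}) \disjointunion V(\mathcal{D})$ as $\tau_1 \cup \tau_2$ with $\tau_1 \subseteq V(\mathcal{C})$ and $\tau_2 \subseteq V(\mathcal{D})$, and observe that because each circuit of $\mathcal{C} \disjointunion \mathcal{D}$ lies entirely on one side, $\tau$ is independent in the disjoint union iff each $\tau_i$ is independent in the corresponding clutter; the definition of join then delivers $\tau \in I(\mathcal{C}) * I(\mathcal{D})$. None of the four parts presents a real obstacle — each is essentially bookkeeping — but the minimal-sets step in (1) and (3) is the one place where sloppy reasoning could misfire, so I would take care to invoke the observation explicitly there.
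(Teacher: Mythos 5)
Your proposal is correct. The paper states this lemma without proof (it is treated as a routine consequence of the definitions), and your direct verification from the definitions---with the minimal-sets observation isolated to handle parts (1) and (3), and the decomposition $\tau=\tau_1\cup\tau_2$ for part (4)---is exactly the sort of argument intended.
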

There are obvious dual versions of these which relate a simplicial
complex $\Delta$ with its minimal non-face clutter $\mathcal{C}(\Delta)$.
\begin{example}
If $G$ is a graph (i.e., a clutter with every circuit having cardinality
2), then let $N[v]=\{v\mbox{ and all its neighbors}\}$. Then $G\setminus v$
is the usual induced subgraph, while $G/v$ is the induced subgraph
$G\setminus N[v]$ together with a singleton circuit for each neighbor
$w$ to $v$. In particular, $I(G/v)=I(G\setminus N[v])$. \end{example}
\begin{rem}
Contraction in a graph $G$ as a clutter should not be confused with
contraction in the circuit matroid of $G$! 
\end{rem}

\begin{rem}
In a graph, every contraction operation can be expressed (up to singleton
circuits) as repeated deletion operations, deleting each vertex in
$N[v]$. This does not hold true in general clutters. For example,
if $\mathcal{C}$ consists of a single circuit of cardinality 3, then
deleting any vertex leaves a clutter with no circuits, while contracting
any vertex leaves a circuit of cardinality 2.
\end{rem}

\subsection{Shellable and Cohen-Macaulay complexes}

We recommend the unfamiliar reader to \cite{Bjorner/Wachs:1996,Bjorner/Wachs:1997}
for background on shellability, and to \cite{Bjorner/Wachs/Welker:2009,Bruns/Herzog:1993,Stanley:1996}
for Cohen-Macaulay and sequentially Cohen-Macaulay simplicial complexes,
but give a brief review here.

\medskip{}

Let $k$ be any field or the ring of integers. A simplicial complex
is \emph{Cohen-Macaulay} if, for every face $\sigma$ (including $\sigma=\emptyset$),
we have $\tilde{H}_{i}(\link_{\Delta}\sigma,k)=0$ for $i<\dim(\link_{\Delta}\sigma)$.
An equivalent condition is for the Stanley-Reisner ring of $\Delta$
to be a Cohen-Macaulay ring, and there is also an equivalent topological
condition which makes no reference to the face structure of $\Delta$.
Examples of complexes that are Cohen-Macaulay (over any $k$) include
any triangulation of a sphere.

Every Cohen-Macaulay simplicial complex $\Delta$ is \emph{pure},
i.e., every pair of facets have the same dimension. There is a related
notion for general complexes. The \emph{pure $d$-skeleton} of a simplicial
complex $\Delta$ is the subcomplex generated by all faces of dimension
$d$. We say that a complex is \emph{sequentially Cohen-Macaulay over
$k$} if the pure $d$-skeleton is Cohen-Macaulay (over $k$) for
all $d$. Once again, there are equivalent ring-theoretic and topological
conditions for the sequentially Cohen-Macaulay property. Any pure
sequentially Cohen-Macaulay complex is Cohen-Macaulay.

A simplicial complex $\Delta$ is \emph{shellable} if there is an
ordering $\sigma_{1},\dots,\sigma_{m}$ of the facets of $\Delta$
such that the intersection of $\sigma_{i}$ with the complex generated
by $\sigma_{1},\dots,\sigma_{i-1}$ is pure $(\dim\sigma_{i}-1)$-dimensional.
When possible, we avoid this definition and work through the condition
of $k$-decomposability introduced in Section \ref{sec:k-decomposableComplexes}. 

Any shellable complex is sequentially Cohen-Macaulay over any $k$,
and we view shellability as a combinatorial condition for a complex
to be sequentially Cohen-Macaulay. Since the results we prove will
be independent of the field or ring $k$, we henceforth suppress $k$
from our notation.

\medskip{}

A \emph{linear resolution} of an ideal $I$ in a ring $R$ is a minimal
free resolution of $R/I$ satisfying certain properties --- the exact
definition will not be important to us, as we work through the characterization
of Eagon and Reiner \cite{Eagon/Reiner:1998} that the property of
possessing a linear resolution is Alexander-dual to being Cohen-Macaulay.
As the details are somewhat complicated, and only required in Section
\ref{sec:LinearResolutions}, we defer further discussion to that
section.

\section{\label{sec:k-decomposableComplexes}$k$-decomposable complexes}

Provan and Billera \cite{Provan/Billera:1980} introduced a definition
of $k$-decomposability for pure complexes. For $k=0$ these are known
as vertex decomposable complexes, and the definition of vertex decomposable
complexes was extended to non-pure complexes by Björner and Wachs
\cite{Bjorner/Wachs:1997}. We now give an analogous extension of
$k$-decomposability to non-pure complexes for $k>0$.

The following definition was first made by Jonsson \cite[Definition 2.10]{Jonsson:2005}:
\begin{defn}
\label{def:SheddingFace} Let $\Delta$ be a simplicial complex on
vertex set $V$. Then a face $\sigma$ is called a \emph{shedding
face} if every face $\tau$ of $\st_{\Delta}\sigma$ satisfies the
following exchange property: for every $v\in\sigma$ there is a $w\in V\setminus\tau$
such that $(\tau\cup\{w\})\setminus\{v\}$ is a face of $\Delta$.\end{defn}
\begin{rem}
An equivalent condition to the exchange property of Definition \ref{def:SheddingFace}
is the following: no facet of $(\st_{\Delta}\sigma)\setminus\sigma$
is a facet of $\Delta\setminus\sigma$.
\end{rem}

\begin{rem}
In the case where $\sigma$ is a single vertex, the definition of
shedding vertex specializes to that of Björner and Wachs \cite[Section 11]{Bjorner/Wachs:1997}.
In the case that $\Delta$ is pure, the definition specializes to
that of Provan and Billera \cite[Definition 2.1]{Provan/Billera:1980}.
\end{rem}
Our main fact about shedding faces is the following generalization
of \cite[Lemma 6]{Wachs:1999b}:
\begin{lem}
\label{lem:SheddingFaceShelling} \emph{(Essentially Jonsson \cite{Jonsson:2005})}
If $\sigma$ is a shedding face for a simplicial complex $\Delta$
such that both $\Delta\setminus\sigma$ and $\link_{\Delta}\sigma$
are shellable, then $\Delta$ is shellable.\end{lem}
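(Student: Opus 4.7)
The approach is to concatenate shellings of the two given subcomplexes. Let $F_1, \dots, F_m$ be a shelling of $\Delta \setminus \sigma$ and $G_1, \dots, G_n$ a shelling of $\link_\Delta \sigma$; I would propose to shell $\Delta$ in the order
\[
F_1, F_2, \dots, F_m,\; G_1 \cup \sigma,\; G_2 \cup \sigma,\; \dots,\; G_n \cup \sigma.
\]
Every facet of $\Delta$ either contains $\sigma$ (and so has the form $G_i \cup \sigma$) or does not (and so is a facet of $\Delta \setminus \sigma$), so this list will exhaust the facets of $\Delta$ provided every facet of $\Delta \setminus \sigma$ is also a facet of $\Delta$. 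This is the first place the shedding hypothesis enters: if $F$ were a facet of $\Delta \setminus \sigma$ but not of $\Delta$, a short unpacking shows that $F$ would be a facet of $(\st_\Delta \sigma) \setminus \sigma$, contradicting the equivalent reformulation of Definition~\ref{def:SheddingFace} given in the following remark.

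Next I would verify the shelling condition facet by facet. The initial block $F_1, \dots, F_m$ satisfies the condition by the shelling of $\Delta \setminus \sigma$. For $G_i \cup \sigma$, any face $\tau$ of the intersection with earlier facets must be exhibited as a subset of a codimension-one face of $G_i \cup \sigma$ that itself lies in that intersection. If $\sigma \subseteq \tau$, then $\tau$ cannot lie in any $F_j$ and so must lie in some earlier $G_k \cup \sigma$; the shelling of $\link_\Delta \sigma$ yields a ridge $R$ of $G_i$ lying in some $G_\ell$ with $\ell < i$ and $\tau \setminus \sigma \subseteq R$, and $R \cup \sigma \subseteq G_\ell \cup \sigma$ is the desired face. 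If instead $\sigma \not\subseteq \tau$, pick $v \in \sigma \setminus \tau$ and consider $(G_i \cup \sigma) \setminus \{v\}$: this is a codimension-one face of $G_i \cup \sigma$ not containing $\sigma$, hence lies in $\Delta \setminus \sigma$ and so is contained in some $F_{j'}$ from the first block.

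The main obstacle is this last step: the codimension-one face $(G_i \cup \sigma) \setminus \{v\}$ must be contained in an $F_{j'}$ that actually appears in the shelling, not merely in some abstract facet of $\Delta \setminus \sigma$ which could a priori fail to be maximal in $\Delta$. This matching is exactly what shedding guarantees via the first paragraph, so the hypothesis is used both to assemble the candidate shelling and, implicitly, to verify it. Once this identification is in hand, the two cases above combine to show that the intersection of $G_i \cup \sigma$ with the union of earlier facets is pure of codimension one, which is the non-pure shelling condition and completes the proof.
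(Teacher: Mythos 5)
Your proof is correct and follows the same route as the paper's sketch: list the facets not containing $\sigma$ in the shelling order of $\Delta\setminus\sigma$, then the facets containing $\sigma$ in the order given by the shelling of $\link_{\Delta}\sigma$. You have filled in the verification that the paper leaves as ``straightforward,'' correctly locating where the shedding hypothesis enters (it ensures every facet of $\Delta\setminus\sigma$ is a facet of $\Delta$, so the concatenation is a genuine facet ordering and so the cross-block ridges land in earlier facets).
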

\begin{proof}[Sketch.]
 We first order the facets not containing $\sigma$ according to
the shelling order of $\Delta\setminus\sigma$, followed by the facets
containing $\sigma$ in the order indicated by the shelling of $\link_{\Delta}\sigma$.
It is straightforward to verify this facet ordering is a shelling.
\end{proof}
We see that the definition of shedding face can be viewed as a tool
to build up a shelling of $\Delta$ by ``sorting'' the facets of
$\Delta$. 
\begin{defn}
\label{def:kDecomposable} A simplicial complex $\Delta$ is recursively
defined to be $k$-decomposable if either $\Delta$ is a simplex or
else has a shedding face $\sigma$ with $\dim\sigma\leq k$ such that
both $\Delta\setminus\sigma$ and $\link_{\Delta}\sigma$ are $k$-decomposable.
We consider the degenerate complexes $\{\}$ and $\{\emptyset\}$
to be $k$-decomposable for all $k\geq-1$.
\end{defn}
Definition \ref{def:kDecomposable} obviously extends the definition
of vertex decomposability and pure $k$-decomposability. 

Many of the theorems proved by Provan and Billera go through straightforwardly
for our definition. Most interesting from the author's perspective
is:
\begin{thm}
\label{thm:dDimlShellableIsdDecomp} \emph{(Jonsson }\cite{Jonsson:2005}\emph{)}
A $d$-dimensional (not necessarily pure) simplicial complex $\Delta$
is shellable if and only if it is $d$-decomposable.\end{thm}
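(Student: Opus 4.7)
The plan is to prove the two directions separately, using induction in both cases. The forward direction is a straightforward application of Lemma~\ref{lem:SheddingFaceShelling}; the reverse requires identifying an appropriate shedding face from the data of a shelling.

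For the direction ``$d$-decomposable $\Rightarrow$ shellable'', I would induct on the number of faces of $\Delta$. The base cases (a simplex or one of the two degenerate complexes) are shellable by inspection. Otherwise, Definition~\ref{def:kDecomposable} supplies a shedding face $\sigma$ of dimension at most $d$ such that $\Delta\setminus\sigma$ and $\link_\Delta\sigma$ are $d$-decomposable, hence shellable by induction. Lemma~\ref{lem:SheddingFaceShelling} then delivers a shelling of $\Delta$.

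For ``shellable $\Rightarrow$ $d$-decomposable'', I would induct on the number of facets $m$ of a shelling $\sigma_1,\dots,\sigma_m$ of $\Delta$. If $m=1$ then $\Delta$ is a simplex. Otherwise let $R=R(\sigma_m)$ be the restriction face: the unique minimal face of $\sigma_m$ not contained in any $\sigma_i$ with $i<m$. Since $R\subseteq\sigma_m$ we have $\dim R \le \dim\sigma_m \le d$. Because $R$ is contained only in $\sigma_m$ among the facets of $\Delta$, the link $\link_\Delta R$ is the full simplex on $\sigma_m\setminus R$, which is $d$-decomposable. Also, the shelling condition forces every proper face of $\sigma_m$ not containing $R$ to lie in some earlier $\sigma_i$, so $\Delta\setminus R$ is precisely the subcomplex $\langle \sigma_1,\dots,\sigma_{m-1}\rangle$; since a prefix of a shelling is a shelling, induction on $m$ makes this complex $d$-decomposable. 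It remains to verify that $R$ is a shedding face: given $\tau\in\st_\Delta R$ (so $\tau\subseteq\sigma_m$) and $v\in R$, the face $\tau\setminus\{v\}$ no longer contains $R$, hence lies in some $\sigma_i$ with $i<m$; since facets are mutually incomparable we can choose $w\in\sigma_i\setminus\sigma_m\subseteq V\setminus\tau$, and then $(\tau\setminus\{v\})\cup\{w\}\subseteq\sigma_i$ is a face of $\Delta$, as required.

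The main obstacle is the verification of the shedding property for $R(\sigma_m)$; in particular, one must exploit both the defining minimality of the restriction face (to force $\tau\setminus\{v\}$ into an earlier facet) and the incomparability of facets (to produce the witness $w$ outside $\sigma_m$). The bound $\dim R\le d$ is automatic once $R\subseteq\sigma_m$ and $\dim\sigma_m\le d$, so no additional work is needed to match the dimension parameter in Definition~\ref{def:kDecomposable}.
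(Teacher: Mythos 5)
Your proposal is correct and follows essentially the same route as the paper: both directions use Lemma~\ref{lem:SheddingFaceShelling} and the observation that the restriction face (minimal new face) of the last facet in a shelling order is a shedding face, which is the content the paper attributes to \cite[Lemma 2.4]{Bjorner/Wachs:1996}. You have simply expanded the paper's two-sentence sketch into a full verification---in particular, you spell out that $\link_\Delta R$ is a simplex, that $\Delta\setminus R$ is generated by the earlier facets, and how incomparability of facets furnishes the exchange witness $w$---but the underlying idea is identical.
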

\begin{proof}
Lemma \ref{lem:SheddingFaceShelling} gives the $(\Leftarrow)$ direction.

Conversely, it follows directly from \cite[Lemma 2.4]{Bjorner/Wachs:1996}
that the ``minimal new face'' contained in the last face of a shelling
order is a shedding face. Induction then gives the $(\Rightarrow)$
direction.
\end{proof}
Theorem \ref{thm:dDimlShellableIsdDecomp} tells us that $k$-decomposability
gives a hierarchical structure on the family of shellable complexes:
every $k$-decomposable complex is also $(k+1)$-decomposable, and
every shellable $d$-dimensional complex is $j$-decomposable for
some $j\leq d$.

Some other theorems that extend easily are:
\begin{prop}
\label{pro:LinkKdecomp} If a simplicial complex $\Delta$ is $k$-decomposable,
then for every face $\tau$ of $\Delta$ it holds that $\link_{\Delta}\tau$
is $k$-decomposable.\end{prop}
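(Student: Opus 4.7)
The plan is to induct on the number of vertices of $\Delta$, or equivalently on the depth of the recursive decomposition. The base case is trivial: if $\Delta$ is a simplex (or $\{\}$ or $\{\emptyset\}$), then so is $\link_\Delta \tau$ for every face $\tau$. For the inductive step, fix a shedding face $\sigma$ of $\Delta$ with $\dim\sigma \le k$ such that both $\Delta \setminus \sigma$ and $\link_\Delta \sigma$ are $k$-decomposable; I will split into three cases according to how a given face $\tau$ sits relative to $\sigma$.

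In the easy cases, I reduce directly to the inductive hypothesis. First, if $\sigma \subseteq \tau$, then
\[
\link_\Delta \tau = \link_{\link_\Delta \sigma}(\tau \setminus \sigma),
\]
and $\link_\Delta \sigma$ is $k$-decomposable with fewer vertices. Second, if $\sigma \cup \tau$ is not a face of $\Delta$, then no face of $\link_\Delta \tau$ can extend $\tau$ by anything containing $\sigma \setminus \tau$, so $\link_\Delta \tau = \link_{\Delta \setminus \sigma} \tau$, which is $k$-decomposable by induction applied to $\Delta \setminus \sigma$.

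The main obstacle is the remaining case, where $\sigma \not\subseteq \tau$ but $\sigma \cup \tau \in \Delta$, since here $\sigma$ is not available as a face of $\link_\Delta \tau$. The natural candidate is $\sigma' = \sigma \setminus \tau$, which lies in $\link_\Delta \tau$ and satisfies $\dim \sigma' \le \dim \sigma \le k$. Verifying that $\sigma'$ is a shedding face is the heart of the argument: given $\rho \in \st_{\link_\Delta \tau}\sigma'$ and $v \in \sigma'$, the face $\rho \cup \tau$ contains $\sigma' \cup (\sigma \cap \tau) = \sigma$, so the shedding property of $\sigma$ in $\Delta$ supplies a $w \in V \setminus (\rho \cup \tau)$ with $((\rho \cup \tau) \cup \{w\}) \setminus \{v\} \in \Delta$; since $v \notin \tau$, this rewrites as $((\rho \cup \{w\}) \setminus \{v\}) \cup \tau$, exhibiting the desired $w$ for $\sigma'$ in $\link_\Delta \tau$.

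It remains to check that both subcomplexes $\link_{\link_\Delta \tau}\sigma'$ and $(\link_\Delta \tau) \setminus \sigma'$ are $k$-decomposable, which I will do by reducing each to an already-handled case. Since $\sigma \setminus \sigma' \subseteq \tau$, one gets the set-theoretic identities
\[
\link_{\link_\Delta \tau}\sigma' = \link_\Delta(\tau \cup \sigma) = \link_{\link_\Delta \sigma}(\tau \setminus \sigma),
\qquad
(\link_\Delta \tau) \setminus \sigma' = \link_{\Delta \setminus \sigma}\tau.
\]
The first is $k$-decomposable by applying the first case above to the $k$-decomposable complex $\link_\Delta \sigma$, and the second is $k$-decomposable by applying the second case to the $k$-decomposable complex $\Delta \setminus \sigma$. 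This completes the inductive step and hence the proof.
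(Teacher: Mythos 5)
Your argument is correct, and it matches the approach the paper implicitly invokes: the paper's proof is the single line ``Entirely similar to \cite[Proposition 2.3]{Provan/Billera:1980},'' so your write-up is essentially a worked-out version of the Provan--Billera argument adapted to the non-pure, arbitrary-$k$ setting. The three-way case split on the position of $\tau$ relative to $\sigma$, the candidate shedding face $\sigma'=\sigma\setminus\tau$ in the only nontrivial case, the verification of the exchange property via $\rho\cup\tau\in\st_{\Delta}\sigma$, and the two identities reducing the link and deletion of $\sigma'$ to links inside $\link_{\Delta}\sigma$ and $\Delta\setminus\sigma$ are all exactly what is needed.

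One small imprecision worth fixing: you frame the induction as ``on the number of vertices of $\Delta$, or equivalently on the depth of the recursive decomposition,'' but these are not equivalent. When $\dim\sigma\geq 1$, the deletion $\Delta\setminus\sigma$ removes only the faces containing $\sigma$ and can have the \emph{same} vertex set as $\Delta$, so a vertex-count induction does not obviously terminate. The depth (or face count, which also strictly decreases for both $\Delta\setminus\sigma$ and $\link_{\Delta}\sigma$) is the right induction variable; state that one and drop the vertex-count phrasing. With that repaired, the proof is complete.
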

\begin{proof}
Entirely similar to \cite[Proposition 2.3]{Provan/Billera:1980}.
\end{proof}
The following is stronger than \cite[Proposition 2.4]{Provan/Billera:1980}:
\begin{prop}
\label{pro:JoinOfKdecIsKdec} The simplicial complexes $\Delta_{1}$
and $\Delta_{2}$ are $k$-decomposable if and only if $\Delta_{1}*\Delta_{2}$
is $k$-decomposable.\end{prop}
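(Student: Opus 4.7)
The plan is to prove both directions using the disjointness of the vertex sets, bootstrapping off Proposition \ref{pro:LinkKdecomp} for the reverse direction and doing a short induction for the forward direction. I will induct on the number of faces of $\Delta_1 * \Delta_2$ (after disposing of the degenerate cases $\Delta_i \in \{\{\}, \{\emptyset\}\}$ by direct inspection: joining with $\{\emptyset\}$ is the identity operation, joining with $\{\}$ produces $\{\}$, and all of these are $k$-decomposable by convention).

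For the ($\Rightarrow$) direction, assume $\Delta_1$ and $\Delta_2$ are $k$-decomposable and not both simplices (when both are simplices, so is the join). Without loss of generality $\Delta_1$ is not a simplex and admits a shedding face $\sigma_1$ with $\dim \sigma_1 \leq k$ such that $\Delta_1 \setminus \sigma_1$ and $\link_{\Delta_1} \sigma_1$ are both $k$-decomposable. The key verification is that $\sigma_1$ remains a shedding face inside $\Delta_1 * \Delta_2$: any $\tau \in \st_{\Delta_1*\Delta_2} \sigma_1$ decomposes uniquely as $\tau = \tau_1 \cup \tau_2$ with $\tau_i \in \Delta_i$, and since $V(\Delta_1) \cap V(\Delta_2) = \emptyset$ and $\sigma_1 \subseteq V(\Delta_1)$, we must have $\tau_1 \supseteq \sigma_1$. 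Given $v \in \sigma_1$, the shedding property in $\Delta_1$ supplies $w \in V(\Delta_1) \setminus \tau_1$ with $(\tau_1 \cup \{w\}) \setminus \{v\} \in \Delta_1$; disjointness then forces $w \notin \tau_2$, so $w \notin \tau$, and
\[
(\tau \cup \{w\}) \setminus \{v\} = \bigl((\tau_1 \cup \{w\}) \setminus \{v\}\bigr) \cup \tau_2 \in \Delta_1 * \Delta_2,
\]
as required. The dimension bound $\dim \sigma_1 \leq k$ is preserved since $\sigma_1$ has the same dimension in both complexes.

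Now I would invoke the standard identities
\[
(\Delta_1 * \Delta_2) \setminus \sigma_1 = (\Delta_1 \setminus \sigma_1) * \Delta_2, \qquad \link_{\Delta_1 * \Delta_2} \sigma_1 = (\link_{\Delta_1} \sigma_1) * \Delta_2,
\]
each of which is straightforward from the definitions. Both right-hand sides are joins with strictly fewer faces than $\Delta_1 * \Delta_2$ (the deletion kills at least the facets through $\sigma_1$, and the link drops the vertices of $\sigma_1$), so the inductive hypothesis applies to show each is $k$-decomposable. Hence $\Delta_1 * \Delta_2$ is $k$-decomposable.

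For the ($\Leftarrow$) direction, assuming both factors are nondegenerate, pick any facet $\sigma_2$ of $\Delta_2$ and observe that $\link_{\Delta_1 * \Delta_2} \sigma_2 = \Delta_1$: any face of the join containing $\sigma_2$ has $\Delta_2$-part equal to $\sigma_2$ by maximality, so deleting $\sigma_2$ leaves exactly an arbitrary face of $\Delta_1$. By Proposition \ref{pro:LinkKdecomp}, $\Delta_1$ is $k$-decomposable, and symmetrically so is $\Delta_2$. The main obstacle is really just the bookkeeping in the forward direction, namely tracking how a witness $w$ for the exchange property in $\Delta_1$ remains a valid witness in the larger join; the use of disjointness of vertex sets is what makes this bookkeeping go through cleanly.
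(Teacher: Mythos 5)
Your proof is correct, and your handling of the reverse direction is genuinely slicker than the paper's. For the forward implication, you make explicit the Provan--Billera-style argument that the paper only alludes to: a shedding face of one factor remains a shedding face of the join (the exchange witness survives disjointness), and the identities $(\Delta_1*\Delta_2)\setminus\sigma_1 = (\Delta_1\setminus\sigma_1)*\Delta_2$ and $\link_{\Delta_1*\Delta_2}\sigma_1 = (\link_{\Delta_1}\sigma_1)*\Delta_2$ close the induction. This matches the intended argument.

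For the reverse implication, your route differs substantively from the paper's. The paper takes a shedding face $\sigma$ of $\Delta_1*\Delta_2$, splits it as $\sigma_i = V(\Delta_i)\cap\sigma$, argues directly that each $\sigma_i$ is a shedding face of $\Delta_i$ (using a facet $\tau_j$ of the other factor as a filler), decomposes the link as a join of links, and invokes Proposition~\ref{pro:LinkKdecomp} only for the deletion $\Delta_1\setminus\sigma_1 = \link_{\Delta\setminus\sigma}\tau_2$. You instead observe that $\Delta_1 = \link_{\Delta_1*\Delta_2}\sigma_2$ for any facet $\sigma_2$ of $\Delta_2$, and conclude immediately from Proposition~\ref{pro:LinkKdecomp}; no induction and no shedding-face bookkeeping is needed in this direction. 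What the paper's approach buys is an explicit description of how a shedding sequence of the join restricts to shedding sequences of the factors (which could be of independent interest, e.g.\ for tracking the actual decomposition order), while yours buys brevity by treating the link-closure lemma as a black box. Both are valid; yours is the shorter path to the stated conclusion.
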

\begin{proof}
The ``only if'' direction is entirely similar to \cite[Proposition 2.4]{Provan/Billera:1980}.

Conversely, suppose that $\sigma$ is a shedding face of $\Delta_{1}*\Delta_{2}$,
with $\sigma_{i}=V(\Delta_{i})\cap\sigma$. Let $\tau_{1}$ be any
facet of $\Delta_{1}$ which contains $\sigma_{1}$, and $\tau_{2}$
similarly for $\Delta_{2}$ and $\sigma_{2}$. Then every face of
the form $\rho_{1}\disjointunion\tau_{2}$ with $\sigma_{1}\subset\rho_{1}$
satisfies the shedding face exchange property, and (since $\tau_{2}$
is a facet) every face $\rho_{1}$ of $\Delta_{1}$ containing $\sigma_{1}$
satisfies the exchange property. Thus $\sigma_{1}$ and by symmetry
$\sigma_{2}$ are shedding faces for $\Delta_{1}$ and $\Delta_{2}$.
Since $\sigma_{1}$ and $\sigma_{2}$ are contained in $\sigma$,
both have dimension $\leq k$, and at least one is non-empty.

Next, we notice that $\link_{\Delta}\sigma=\link_{\Delta_{1}}\sigma_{1}*\link_{\Delta_{2}}\sigma_{2}$,
and by induction each of $\link_{\Delta_{1}}\sigma_{1}$ and $\link_{\Delta_{2}}\sigma_{2}$
is $k$-decomposable. (If $\sigma_{i}=\emptyset$, then we notice
that $\link_{\Delta_{i}}\sigma_{i}=\Delta_{i}$.) Finally, $\Delta_{1}\setminus\sigma_{1}=\link_{\Delta\setminus\sigma}\tau_{2}$,
which is $k$-decomposable by Proposition \ref{pro:LinkKdecomp}.
\end{proof}
Our main application of $k$-decomposability will come in Section
\ref{sec:Shellability}, where we use it to prove that the independence
complex of a chordal clutter is shellable.
\begin{rem}
Simon \cite[Section 2.3]{Simon:1994} has introduced ``clean ideal
trees,'' an extension of $k$-decomposability via commutative algebra;
however the concrete condition for a shedding face seems to better
lend itself to constructing shellings.
\end{rem}
We prove the following lemma for use in Section \ref{sec:LinearResolutions}. 
\begin{lem}
\label{lem:VertDecSkeleton}Let $\Delta$ be a vertex decomposable
simplicial complex. Then the $s$-skeleton of $\Delta$ is vertex
decomposable for any $s$.\end{lem}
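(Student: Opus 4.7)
The plan is to induct on $|V(\Delta)|$, using the two elementary identities
\[
\Delta^{(s)} \setminus v \;=\; (\Delta \setminus v)^{(s)}, \qquad \link_{\Delta^{(s)}} v \;=\; (\link_\Delta v)^{(s-1)},
\]
which both follow directly from the definitions of skeleton, deletion, and link. The degenerate complexes $\{\}$ and $\{\emptyset\}$ equal their own skeletons, handling the trivial base cases.

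Next I would dispatch the case where $\Delta$ is a simplex on $n$ vertices. If $s \geq n-1$ then $\Delta^{(s)} = \Delta$ is itself a simplex. Otherwise $s < n-1$, and I claim any vertex $v$ is a shedding vertex of $\Delta^{(s)}$: facets of $\Delta^{(s)} \setminus v$ have exactly $s+1$ vertices, while faces of $\link_{\Delta^{(s)}} v$ have at most $s$ vertices, so no face of the link is a facet of the deletion. Since $\Delta \setminus v$ and $\link_\Delta v$ are simplices on $n-1$ vertices, the inductive hypothesis combined with the identities above produces the vertex decomposability of $\Delta^{(s)} \setminus v$ and $\link_{\Delta^{(s)}} v$, which together with the shedding property gives the simplex case.

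For the non-simplex inductive step, let $v$ be a shedding vertex of $\Delta$ such that $\Delta \setminus v$ and $\link_\Delta v$ are vertex decomposable. The inductive hypothesis immediately supplies vertex decomposability of $(\Delta \setminus v)^{(s)} = \Delta^{(s)} \setminus v$ and $(\link_\Delta v)^{(s-1)} = \link_{\Delta^{(s)}} v$. The one thing left to verify is that $v$ remains a shedding vertex of $\Delta^{(s)}$. Suppose for contradiction that some $\tau \in \link_{\Delta^{(s)}} v$ is a facet of $\Delta^{(s)} \setminus v$. From $\tau \cup \{v\} \in \Delta^{(s)}$ we obtain $|\tau| \leq s$, so any face $\tau \cup \{w\} \in \Delta$ with $w \neq v$ has cardinality at most $s+1$ and therefore lies in $\Delta^{(s)}$. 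Since $\tau$ is a facet of $\Delta^{(s)} \setminus v$, no such $w$ exists, meaning $\tau$ is also a facet of $\Delta \setminus v$; but $\tau \in \link_\Delta v$ as well, contradicting that $v$ is a shedding vertex of $\Delta$.

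The argument is mostly bookkeeping: once the compatibility of skeleton with deletion and link is in hand, the only real content is the dimension count promoting a shedding vertex of $\Delta$ to one of $\Delta^{(s)}$. That observation is also the reason the simplex case requires separate handling, since in a simplex $\link_\Delta v$ and $\Delta \setminus v$ coincide and no vertex is shedding in the full complex even though every vertex is shedding in each proper skeleton.
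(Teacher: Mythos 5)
Your proof is correct and follows the same strategy as the paper: reduce via the identities $\Delta^{(s)}\setminus v=(\Delta\setminus v)^{(s)}$ and $\link_{\Delta^{(s)}}v=(\link_{\Delta}v)^{(s-1)}$, split into the simplex and non-simplex cases, and verify that the shedding vertex survives passage to the skeleton. You merely spell out the dimension-counting details that the paper compresses into ``easily seen.''
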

\begin{proof}
Let $\Delta^{(s)}$ denote the $s$-skeleton of a simplicial complex.
Clearly, $\link_{\Delta^{(s)}}v=(\link_{\Delta}v)^{(s-1)}$, while
$\Delta^{(s)}\setminus v=(\Delta\setminus v)^{(s)}$, so that by induction
it suffices to produce a shedding vertex in $\Delta^{(s)}$. Then
either $\Delta$ is a simplex, in which case every vertex of $\Delta^{(s)}$
is a shedding vertex; or else $\Delta$ has a shedding vertex, which
is easily seen to remain a shedding vertex in $\Delta^{(s)}$.
\end{proof}
We close this section with a question, which we believe to be open
even in the case of flag complexes ($m=2$).
\begin{question}
What is the smallest $k$ such that if $\Delta$ is a shellable $d$-dimensional
complex with $\mathcal{C}(\Delta)$ having maximum circuit cardinality
$m$, then $\Delta$ is necessarily $k$-decomposable?
\end{question}
It is particularly natural to ask if every such complex is $(m-1)$-decomposable,
as deleting an $(m-1)$-face preserves the maximum circuit cardinality
condition.

\section{\label{sec:ChordalClutters}Chordal clutters}

Before introducing our definition of chordal clutters, we recall the
definition and main structure theorem for chordal graphs. A graph
is \emph{chordal} if every induced cyclic subgraph of $G$ has length
$3$. A vertex $v$ of $G$ is \emph{simplicial} if the neighborhood
of $v$ in $G$ is a complete subgraph. The main theorem characterizing
chordal graphs is:
\begin{thm}
\emph{(essentially G.~Dirac \cite{Dirac:1961})} A graph $G$ is
chordal if and only if every induced subgraph of $G$ has a simplicial
vertex.
\end{thm}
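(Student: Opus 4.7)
The plan is to handle the two directions separately, with the easy direction first. For $(\Leftarrow)$, assume every induced subgraph of $G$ has a simplicial vertex, and suppose for contradiction that $G$ contains an induced cycle $C$ of length $\geq 4$. The induced subgraph on $V(C)$ is $C$ itself, and no vertex of such a cycle is simplicial because its two cycle-neighbors are non-adjacent. Since induced subgraphs of chordal graphs are again chordal, for $(\Rightarrow)$ it suffices to show that every chordal graph has a simplicial vertex. I would actually prove the slightly stronger statement by induction on $|V(G)|$: every chordal graph is either complete or contains two non-adjacent simplicial vertices.

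The crux of the argument is a lemma stating that in a chordal graph $G$, any minimal $a,b$-separator $S$ induces a clique. To see this, fix distinct $u,v \in S$ and let $C_a, C_b$ be the components of $G \setminus S$ containing $a$ and $b$. By minimality, each of $u$ and $v$ has a neighbor in $C_a$ and in $C_b$, so one may choose a shortest path $P_a$ from $u$ to $v$ whose interior lies in $C_a$, and similarly $P_b$ from $v$ to $u$ through $C_b$. Concatenating yields a cycle of length $\geq 4$ whose only possible chord would be $uv$ itself (since $C_a$ and $C_b$ are in different components of $G\setminus S$ and the $P_a, P_b$ were chosen shortest). Chordality of $G$ forces $uv$ to be an edge, proving the lemma.

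To finish the induction, if $G$ is complete there is nothing to prove; otherwise pick non-adjacent $a,b \in V(G)$ and a minimal $a,b$-separator $S$, which is a clique by the lemma. The induced subgraphs $G_a$ and $G_b$ on $C_a \cup S$ and $C_b \cup S$ are chordal and strictly smaller, so by induction each is either complete or has two non-adjacent simplicial vertices. In both cases, each of $G_a$ and $G_b$ contains a simplicial vertex lying outside $S$: in the complete case pick $a$ or $b$; in the non-complete case, since $S$ is a clique at least one of the two non-adjacent simplicial vertices avoids $S$. These two chosen vertices lie in $C_a$ and $C_b$ respectively, hence are non-adjacent in $G$, and remain simplicial in $G$ because their neighborhoods lie entirely inside $G_a$ or $G_b$. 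The main obstacle is the separator lemma — extracting a genuinely induced $\geq 4$-cycle from the closed walk built out of $P_a$ and $P_b$ requires a careful choice of shortest paths, but everything else reduces to bookkeeping.
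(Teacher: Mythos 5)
The paper does not prove this theorem; it cites it as a classical result of Dirac and moves on, so there is no in-paper argument to compare against. Your proof is the standard textbook argument and it is correct: the $(\Leftarrow)$ direction follows because an induced cycle of length $\geq 4$, as its own induced subgraph, has no simplicial vertex; for $(\Rightarrow)$ you strengthen to ``every chordal graph is complete or has two non-adjacent simplicial vertices,'' prove that minimal separators induce cliques, and split along a minimal $a,b$-separator to run the induction. The step you flag as delicate --- extracting an induced $\geq 4$-cycle from the concatenation of $P_a$ and $P_b$ --- does go through: shortest paths admit no internal chords, vertices of $C_a$ are non-adjacent to vertices of $C_b$ since they lie in different components of $G\setminus S$, and the endpoints $u,v$ cannot shortcut into the interior of a shortest path without contradicting minimality of the path length; thus $uv$ is the only conceivable chord, and chordality forces it to be present. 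One also needs $G_a$, $G_b$ to be strictly smaller than $G$, which holds because $C_b$ is nonempty and disjoint from $C_a\cup S$ (and symmetrically). The argument is complete.
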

Most of the attempts in algebraic combinatorics at extending the definition
of chordal to clutters have centered around extending the definition
of simplicial vertex, and ours will be no exception.
\begin{defn}
\label{def:SimplicialVertexClutter} Let $\mathcal{C}$ be a clutter.
A vertex $v$ of $\mathcal{C}$ is \emph{simplicial} if for every
two circuits $e_{1}$ and $e_{2}$ of $\mathcal{C}$ that contain
$v$, there is a third circuit $e_{3}$ such that $e_{3}\subseteq(e_{1}\cup e_{2})\setminus\{v\}$.
\end{defn}
In the case where $G$ is a graph, Definition \ref{def:SimplicialVertexClutter}
obviously agrees with the previous definition of a simplicial vertex.
\begin{defn}
A clutter $\mathcal{C}$ is \emph{chordal} if every minor of $\mathcal{C}$
has a simplicial vertex. \end{defn}
\begin{example}
The following clutters are chordal:
\begin{enumerate}
\item Chordal graphs: If $G$ is a graph, then $G/v$ is (up to singleton
circuits) the induced subgraph $G\setminus N[v]$. Hence the definition
of chordal clutter specializes in graphs to the usual definition of
chordal.
\item The \emph{complete $d$-uniform clutter} $\mathcal{K}_{n}^{d}$ is
the clutter with $n$ vertices and circuit set ${V \choose d}$. Since
$\mathcal{K}_{n}^{d}/v\cong\mathcal{K}_{n-1}^{d-1}$, $\mathcal{K}_{n}^{d}\setminus v\cong\mathcal{K}_{n-1}^{d}$,
and every vertex is simplicial, the complete $d$-uniform clutter
is chordal.
\item Matroid circuits: Compare Definition \ref{def:SimplicialVertexClutter}
with Example \ref{exa:MatroidClutter}. The simplicial vertex condition
is exactly the weak circuit exchange property of matroids at a single
vertex $v$. Thus every vertex of a matroid circuit clutter is simplicial.
Since every deletion or contraction of a matroid gives another matroid,
the circuit clutter of any matroid is chordal.\\
For example, $\mathcal{K}_{n}^{d}$ is the circuit clutter of a uniform
matroid.
\end{enumerate}
\end{example}

\begin{example}
\label{exa:FreeVertexProperty} Van Tuyl and Villareal \cite{VanTuyl/Villarreal:2008}
define a clutter $\mathcal{C}$ to have the \emph{free vertex property}
if every minor of $\mathcal{C}$ has a \emph{free vertex}, that is,
a vertex appearing in exactly one circuit of $\mathcal{C}$. We observe
that a free vertex is simplicial, so clutters with the free vertex
property are chordal. Clutters with the free vertex property were
shown to be shellable in \cite[Theorem 5.3]{VanTuyl/Villarreal:2008},
and a restricted case of Proposition \ref{pro:SimplicialAndContractionsGivesShellable}
for free vertices was shown in \cite[Theorem 2.8]{Morey/Reyes/Villareal:2008}.

Van Tuyl and Villarreal notice \cite[Corollary 5.7]{VanTuyl/Villarreal:2008}
that if $G$ is a chordal graph, then the clutter $\mathcal{C}$ with
vertices $V(G)$ and circuits consisting of all cliques in $G$ has
the free vertex property. The Graham-Yu-Özsoyglu algorithm from database
theory can be used to show that every clutter with the free vertex
property has this form: a helpful reference is \cite[especially Theorem 3.4]{Beeri/Fagin/Maier/Yannakakis:1983}.
Specifically, the Graham-Yu-Özsoyglu algorithm chooses a free vertex
$v$ contained in a unique circuit $e$, and deletes $v$ if $e\setminus\{v\}$
is strictly contained in another circuit, and contracts $v$ otherwise
--- the algorithm terminates if and only if $\mathcal{C}$ is the
clutter of cliques of a chordal graph.\end{example}
\begin{rem}
Clutters (and more generally hypergraphs) which have the free vertex
property have often been referred to as ``acyclic''. Since despite
the name these clutters may have cycles, we prefer the free vertex
property terminology.\end{rem}
\begin{example}
The clutter with circuits $\{1,2,3\},\{1,4,5\},\{2,3,4,5\},\{2,3,6\},\{4,5,6\}$
has simplicial vertex $1$, and is easily verified to be chordal;
but is not a chordal graph or matroid circuit clutter, and does not
have the free vertex property.
\end{example}

\begin{example}
\label{exa:EmtanderChordal} Emtander \cite{Emtander:2010}, extending
ideas from Hà and Van Tuyl \cite{Ha/VanTuyl:2008}, has a different
but related definition of chordal for $d$-uniform clutters. Let a
vertex $v$ be a \emph{complete-neighborhood vertex} if the induced
subclutter on $S=\{x\,:\, x,v\in e\}$ is the complete $d$-uniform
clutter, i.e. has circuits ${S \choose d}$. Emtander calls a $d$-uniform
clutter ``chordal'' if every induced subclutter either has a complete-neighborhood
vertex, or else no circuits. 

A complete-neighborhood vertex is clearly simplicial in our sense,
but Emtander requires only deletions to have simplicial vertices,
while we require both deletions and contractions. Examples which are
chordal in our sense but not in Emtander's are easy to come by (most
matroids will do). An example which has complete-neighborhood vertices
in every induced subclutter but is not chordal is the clutter $\mathcal{C}$
with circuits $\{1,2,3\},\{3,4,5\},\{5,6,7\},\{7,8,1\}$. Every induced
subclutter of $\mathcal{C}$ has the free vertex property, but contracting
$2$, $4,$ $6$, and $8$ leaves the cyclic graph $C_{4}$. (It follows
immediately that $I(\mathcal{C})$ is not shellable or sequentially
Cohen-Macaulay.) 
\end{example}

\section{\label{sec:Shellability}Shellability of the independence complex}

Our main goal of this section will be to prove Theorem \ref{thm:ChordalClutterShellable}.

Recall \cite[Lemma 6]{Woodroofe:2009a} that if $G$ is a graph with
vertices $v$ and $w$ such that $N[v]\subseteq N[w]$, then $w$
is a shedding vertex in $I(G)$. Motivated by this result, we define
a \emph{neighborhood containment pair }of a clutter $\mathcal{C}$
to be a vertex $v$ and a circuit $e$ with $v\in e$ such that if
$v\in e_{2}$ for any circuit $e_{2}\neq e$, then there exists an
circuit $e_{3}\subseteq(e\cup e_{2})\setminus v$. Thus, a simplicial
vertex forms a neighborhood containment pair with any circuit containing
it.
\begin{lem}
\label{lem:NbhdContainment}If $\mathcal{C}$ is a clutter with a
neighborhood containment pair $(v,e)$ then $\sigma=e\setminus\{v\}$
is a shedding face of $I(\mathcal{C})$.\end{lem}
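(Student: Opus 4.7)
The plan is to verify the shedding-face exchange property of Definition~\ref{def:SheddingFace} directly, using $w = v$ itself as the replacement vertex in every case.

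First I would observe that $\sigma = e \setminus \{v\}$ is indeed a face of $I(\mathcal{C})$: no proper subset of the circuit $e$ contains a circuit, by the clutter property. Next, for any $\tau$ in $\st_{I(\mathcal{C})}\sigma$, i.e.\ any independent $\tau$ with $\sigma \subseteq \tau$, one has $v \notin \tau$, since otherwise $\tau$ would contain $\sigma \cup \{v\} = e$, contradicting independence.

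For the exchange property I would fix $u \in \sigma$ and claim that $(\tau \cup \{v\}) \setminus \{u\}$ is independent. Suppose for contradiction that some circuit $e'$ of $\mathcal{C}$ is contained in this set. If $v \notin e'$, then $e' \subseteq \tau$, contradicting independence of $\tau$. If $v \in e'$ and $e' = e$, then $u$ lies in $e = e'$ but has just been removed, a contradiction. The remaining case $v \in e'$ with $e' \neq e$ is the substantive one: the neighborhood-containment hypothesis produces a circuit $e'' \subseteq (e \cup e') \setminus \{v\}$, and since $e \setminus \{v\} = \sigma \subseteq \tau$ and $e' \setminus \{v\} \subseteq \tau$, we conclude $e'' \subseteq \tau$, again contradicting independence. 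This third case is precisely where the neighborhood-containment hypothesis is invoked; the other two are bookkeeping, and the proof is essentially just repackaging the definition of neighborhood containment pair in the language of shedding faces.
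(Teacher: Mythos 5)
Your proof is correct and follows essentially the same route as the paper's: verify the exchange axiom by taking $w = v$ as the replacement vertex and using the neighborhood-containment hypothesis to rule out a circuit through $v$ appearing in $(\tau\cup\{v\})\setminus\{u\}$. The only difference is cosmetic — you explicitly split off the $e' = e$ case, which the paper handles implicitly — so the two arguments are the same in substance.
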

\begin{proof}
Suppose that $\tau$ is a face of $I(\mathcal{C})$ containing $\sigma$.
Then $\tau\cup v$ contains $e$, so is not a face (and in particular
$v\notin\tau$). For $x\in\sigma$, if $(\tau\setminus x)\cup v$
is not a face of $I(\mathcal{C})$ (hence of $I(\mathcal{C})\setminus\sigma$),
then $(\tau\setminus x)\cup v$ contains some circuit  $e_{2}$ with
$v\in e_{2}$. But then the neighborhood containment condition gives
an $e_{3}\subseteq(e\cup e_{2})\setminus v\subseteq\tau$, contradicting
the choice of $\tau$ as a face. Hence any such $x$ can be exchanged
for $v$, fulfilling the shedding face exchange axiom.\end{proof}
\begin{prop}
\label{pro:SimplicialAndContractionsGivesShellable}If $\mathcal{C}$
is a clutter containing a simplicial vertex $v$, and if every proper
contraction of $\mathcal{C}$ is shellable, then $\mathcal{C}$ is
shellable.\end{prop}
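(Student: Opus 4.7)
The plan is to apply the shedding face machinery of Lemmas \ref{lem:NbhdContainment} and \ref{lem:SheddingFaceShelling}. Since $v$ is simplicial, for any circuit $e$ containing $v$ the pair $(v,e)$ is automatically a neighborhood containment pair, so $\sigma := e \setminus \{v\}$ is a shedding face of $I(\mathcal{C})$. By Lemma \ref{lem:SheddingFaceShelling}, shellability of $I(\mathcal{C})$ then reduces to shellability of both $\link_{I(\mathcal{C})} \sigma$ and $I(\mathcal{C}) \setminus \sigma$. The degenerate case in which $v$ lies in no circuit is disposed of at the outset: $v$ is then a cone vertex of $I(\mathcal{C})$, so $I(\mathcal{C}) = \{v\} * I(\mathcal{C}/v)$ is shellable by Proposition \ref{pro:JoinOfKdecIsKdec} applied to the shellable $I(\mathcal{C}/v)$.

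The link $\link_{I(\mathcal{C})} \sigma = I(\mathcal{C}/\sigma)$ is the independence complex of a proper contraction (when $\sigma \neq \emptyset$, i.e.\ $|e| \geq 2$), and therefore shellable by hypothesis. The deletion $I(\mathcal{C}) \setminus \sigma = I(\mathcal{C}')$, with $\mathcal{C}' := \min(\mathcal{C} \cup \{\sigma\})$, is the main difficulty; my strategy is to apply the proposition recursively to $\mathcal{C}'$, via a double induction on the pair $(|V(\mathcal{C})|, n)$ ordered lexicographically, where $n$ denotes the number of circuits of $\mathcal{C}$ containing $v$.

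The recursive application requires verifying that $\mathcal{C}'$ satisfies the proposition's hypotheses at a strictly smaller pair. First, $v$ remains simplicial in $\mathcal{C}'$: for any two circuits $e_i, e_j \in \mathcal{C}'$ through $v$ (necessarily both $\neq e$), the original simplicial witness $e_{ij} \subseteq (e_i \cup e_j) \setminus \{v\}$ either survives in $\mathcal{C}'$, or is dominated by the smaller set $\sigma$, which is itself a valid witness. Second, $n(\mathcal{C}', v) = n - 1$, providing the required lexicographic decrease. Third, every proper contraction $\mathcal{C}'/T$ is shellable: when $v \in T$, a direct calculation shows $\mathcal{C}'/T = \mathcal{C}/T$ (the substitution $e \leftrightarrow \sigma$ collapses once $v$ is contracted, since $\sigma = e \setminus \{v\}$), so shellability follows from the hypothesis on $\mathcal{C}$; when $v \notin T$, the contraction $\mathcal{C}'/T$ has strictly fewer vertices than $\mathcal{C}$, and the outer induction on $|V|$ applies to it, noting that $v$ is still simplicial there and that its own contractions either hit the $v \in T$ case or descend further in $|V|$.

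The main obstacle is this third point: propagating the hypothesis ``every proper contraction is shellable'' through the modification $\mathcal{C} \mapsto \mathcal{C}'$. The $v \in T$ case is a clean combinatorial computation, but the $v \notin T$ case depends on the nested inductive structure above, and its termination relies on the fact that each recursive call in that branch strictly decreases $|V|$ until a small base case is reached.
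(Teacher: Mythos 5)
Your proposal takes a genuinely different route from the paper's, though it starts identically: both observe that $\sigma = e \setminus \{v\}$ is a shedding face (via the neighborhood-containment lemma), so that by Lemma \ref{lem:SheddingFaceShelling} it suffices to show $\link_{I(\mathcal{C})}\sigma = I(\mathcal{C}/\sigma)$ and $I(\mathcal{C})\setminus\sigma = I(\mathcal{C}')$ are shellable, with $\mathcal{C}' = \min(\mathcal{C}\cup\{\sigma\})$. The divergence is in handling $I(\mathcal{C}')$.

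The paper's proof does not recurse on the proposition. It lists all circuits $e_1,\dots,e_k$ through $v$, sets $\sigma_i = e_i\setminus\{v\}$ and $\mathcal{C}_i = \min(\mathcal{C}\cup\{\sigma_1,\dots,\sigma_i\})$, and makes the key observation --- using simpliciality of $v$ --- that $\mathcal{C}_{i-1}/\sigma_i = \mathcal{C}/\sigma_i$. This means every link encountered in the iteration is directly a proper contraction of the \emph{original} $\mathcal{C}$, so the hypothesis never needs to be re-derived for any modified clutter; and $I(\mathcal{C}_k) = v * I(\mathcal{C}/v)$ is a cone over a shellable complex. The shelling is then assembled by $k$ applications of Lemma \ref{lem:SheddingFaceShelling}, with no auxiliary induction.

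Your proposal instead applies the proposition recursively to $\mathcal{C}'$, which forces you to re-establish the hypothesis ``every proper contraction of $\mathcal{C}'$ is shellable'' from scratch. Your treatment of the $v\in T$ case (that $\mathcal{C}'/T = \mathcal{C}/T$, since contracting $v$ collapses $e$ onto $\sigma$) is correct, and your observation that $v$ remains simplicial in $\mathcal{C}'$ is also correct and parallel to what the paper checks for the $\mathcal{C}_i$. The $v\notin T$ branch, however, forces another application of the proposition to $\mathcal{C}'/T$, whose hypothesis must itself be verified, and so on. This recursion does terminate --- the recursive calls strictly decrease $\lvert V\rvert$, so one can formalize it as a nested induction on, say, $\lvert V \setminus T\rvert$, with the lexicographic $(\lvert V\rvert, n)$ measure for the outer recursion --- but you do not actually set up this induction; ``each recursive call strictly decreases $\lvert V\rvert$ until a small base case is reached'' is the right intuition but is not a complete termination argument. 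The paper's identity $\mathcal{C}_{i-1}/\sigma_i = \mathcal{C}/\sigma_i$ is exactly what short-circuits this nesting, and is worth internalizing: it is the structural fact that makes the proposition provable by a single forward pass rather than by a recursion that must re-verify its own hypotheses.
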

\begin{proof}
Let $e_{1},\dots,e_{k}$ be the circuits containing $v$, and $\sigma_{1},\dots,\sigma_{k}$
be the associated shedding faces $e_{i}\setminus\{v\}$. Let $\mathcal{C}_{0}=\mathcal{C}$,
and $\mathcal{C}_{i}$ be generated by the minimal sets of $\mathcal{C}\cup\{\sigma_{1},\dots,\sigma_{i}\}$,
so that $I(\mathcal{C}_{i})=I(\mathcal{C})\setminus\sigma_{1}\setminus\dots\setminus\sigma_{i}$. 

Then since there is an $e'\subseteq(e_{i}\cup e_{j})\setminus v=\sigma_{i}\cup\sigma_{j}$,
we get that $\mathcal{C}_{i-1}$ has some $e''\subseteq e'\subseteq\sigma_{i}\cup\sigma_{j}$.
In $\mathcal{C}_{i-1}/\sigma_{i}$ the circuit $e''$ contracts to
$e'''\subseteq\sigma_{j}$. In particular the minimal sets of $\mathcal{C}_{i-1}/\sigma_{i}$
are the same as those of $\mathcal{C}/\sigma_{i}$. We have shown
that $\mathcal{C}_{i-1}/\sigma_{i}=\mathcal{C}/\sigma_{i}$.

It is straightforward to check that $v$ is simplicial in each of
$\mathcal{C}_{1},\dots,\mathcal{C}_{k-1}$, and that the circuits
of $\mathcal{C}_{i}$ containing $v$ are $e_{i+1},\dots,e_{k}$.
By Lemma \ref{lem:NbhdContainment} we have that $\sigma_{i}$ is
a shedding face in $I(\mathcal{C}_{i-1})$. Every required link of
the form $\link_{I(\mathcal{C}_{i-1})}\sigma_{i}=I(\mathcal{C}_{i-1}/\sigma_{i})=I(\mathcal{C}/\sigma_{i})$
is shellable. The vertex $v$ is isolated in $I(\mathcal{C}_{k})$,
so that $I(\mathcal{C}_{k})=I(\mathcal{C}/v)*v$ is shellable; while
each $I(\mathcal{C}_{i})$ is shellable by Lemma \ref{lem:SheddingFaceShelling}
and induction.
\end{proof}
A short intuitive explanation of the proof of Proposition \ref{pro:SimplicialAndContractionsGivesShellable}
is that the faces $\sigma_{i}=e_{i}\setminus\{v\}$ are exactly the
circuits added when $v$ is contracted, so that deleting all of the
$\sigma_{i}$'s from $I(\mathcal{C})$ leaves $\left(\mathcal{C}/v\right)\disjointunion\{v\}$
as the minimal non-face clutter. 
\begin{cor}
\label{cor:ContractionSimplicial}Let $\mathcal{C}$ be a clutter
with maximum circuit cardinality $k$, such that every contraction
of $\mathcal{C}$ has a simplicial vertex. Then $I(\mathcal{C})$
is $(k-2)$-decomposable, hence shellable and sequentially Cohen-Macaulay.\end{cor}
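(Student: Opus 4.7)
The plan is to prove the $(k-2)$-decomposability claim directly, from which shellability (and hence the sequentially Cohen-Macaulay property) follows via Lemma \ref{lem:SheddingFaceShelling} applied recursively. Essentially the proof mimics that of Proposition \ref{pro:SimplicialAndContractionsGivesShellable} with dimension bookkeeping added, using an outer induction on $|V(\mathcal{C})|$.

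Applying the hypothesis to the trivial contraction $\mathcal{C}$ itself, pick a simplicial vertex $v$. Let $e_{1},\dots,e_{m}$ be the circuits through $v$, set $\sigma_{i}=e_{i}\setminus\{v\}$, and form the clutter sequence $\mathcal{C}_{0}=\mathcal{C},\mathcal{C}_{1},\dots,\mathcal{C}_{m}$ exactly as in the proof of Proposition \ref{pro:SimplicialAndContractionsGivesShellable}. That proof already establishes: $v$ remains simplicial in every $\mathcal{C}_{i}$, each pair $(v,e_{i})$ is a neighborhood containment pair in $\mathcal{C}_{i-1}$ (so by Lemma \ref{lem:NbhdContainment} each $\sigma_{i}$ is a shedding face of $I(\mathcal{C}_{i-1})$), $\mathcal{C}_{i-1}/\sigma_{i}=\mathcal{C}/\sigma_{i}$, and $I(\mathcal{C}_{m})=I(\mathcal{C}/v)*\{v\}$. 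The new observation needed is merely the dimension bound $\dim\sigma_{i}=|e_{i}|-2\leq k-2$, which is exactly what is required for $\sigma_{i}$ to serve as a shedding face for $(k-2)$-decomposability.

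Next I would verify that the hypotheses are inherited by any contraction: contractions never enlarge circuits, so the maximum circuit cardinality stays at most $k$; and contractions of a contraction of $\mathcal{C}$ are themselves contractions of $\mathcal{C}$, hence still have simplicial vertices. Since each such contraction has strictly fewer vertices than $\mathcal{C}$, the outer inductive hypothesis applies. In particular, $I(\mathcal{C}/v)$ and each $I(\mathcal{C}/\sigma_{i})$ are $(k-2)$-decomposable. Combining $I(\mathcal{C}/v)$ with the trivially $(k-2)$-decomposable complex $\{v\}$ via Proposition \ref{pro:JoinOfKdecIsKdec} shows $I(\mathcal{C}_{m})=I(\mathcal{C}/v)*\{v\}$ is $(k-2)$-decomposable.

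A descending induction on $i$ from $m$ to $0$ now finishes the proof: assuming $I(\mathcal{C}_{i})$ is $(k-2)$-decomposable, the shedding face $\sigma_{i}$ of dimension $\leq k-2$, the $(k-2)$-decomposable link $\link_{I(\mathcal{C}_{i-1})}\sigma_{i}=I(\mathcal{C}/\sigma_{i})$, and the $(k-2)$-decomposable deletion $I(\mathcal{C}_{i-1})\setminus\sigma_{i}=I(\mathcal{C}_{i})$ together witness $(k-2)$-decomposability of $I(\mathcal{C}_{i-1})$. Taking $i=0$ delivers the conclusion for $I(\mathcal{C})$. Honestly, there is little conceptual obstacle here --- Proposition \ref{pro:SimplicialAndContractionsGivesShellable} has already done the hard combinatorial work of exhibiting the shedding face structure; the only real tasks are the dimension tracking, the nested induction, and invoking Proposition \ref{pro:JoinOfKdecIsKdec} to handle the terminal join with $v$.
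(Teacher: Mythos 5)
Your proof is correct and matches the paper's approach. The paper's own proof is a one-liner --- ``By induction and noting that each shedding face produced in Proposition \ref{pro:SimplicialAndContractionsGivesShellable} has dimension at most $k-2$'' --- and you have simply filled in the details that this sentence compresses: the dimension bound $\dim\sigma_i=|e_i|-2\le k-2$, the observation that contractions preserve the hypotheses (never enlarge circuits, contractions of contractions are contractions), the outer induction on $|V(\mathcal{C})|$, and the handling of the terminal join $I(\mathcal{C}/v)*\{v\}$ via Proposition \ref{pro:JoinOfKdecIsKdec}.
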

\begin{proof}
By induction and noting that each shedding face produced in Proposition
\ref{pro:SimplicialAndContractionsGivesShellable} has dimension at
most $k-2$.
\end{proof}
We thus have the following specialization of Theorem \ref{thm:ChordalClutterShellable}:
\begin{cor}
If $\mathcal{C}$ is a chordal clutter with maximum circuit cardinality
$k$, then $I(\mathcal{C})$ is $(k-2)$-decomposable, hence shellable
and sequentially Cohen-Macaulay.
\end{cor}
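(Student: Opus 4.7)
The plan is very short: this statement is essentially a direct specialization of Corollary \ref{cor:ContractionSimplicial} to the chordal case, so the proof amounts to checking that the hypothesis of that corollary is satisfied by any chordal clutter.

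First I would observe that if $\mathcal{C}$ is chordal, then by definition every minor of $\mathcal{C}$ has a simplicial vertex. In particular, every contraction of $\mathcal{C}$ (being a minor) has a simplicial vertex, which is exactly the hypothesis of Corollary \ref{cor:ContractionSimplicial}. One also needs to know that the maximum circuit cardinality of every contraction is bounded above by $k$; but this is immediate from the definition of contraction, since removing a vertex from a circuit can only decrease its cardinality (and may create a smaller circuit that makes a previously larger one redundant).

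With both conditions in hand, Corollary \ref{cor:ContractionSimplicial} applies directly and yields that $I(\mathcal{C})$ is $(k-2)$-decomposable. The shellability and sequential Cohen-Macaulay conclusions then come for free: $(k-2)$-decomposability implies shellability by Theorem \ref{thm:dDimlShellableIsdDecomp} (or more simply by iterated application of Lemma \ref{lem:SheddingFaceShelling}), and shellable complexes are sequentially Cohen-Macaulay as noted in the background section.

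There is no real obstacle here; the entire content of the statement has already been absorbed into Corollary \ref{cor:ContractionSimplicial} and the definition of a chordal clutter. The only thing worth emphasizing in the writeup is why ``every minor has a simplicial vertex'' is strictly stronger than what Corollary \ref{cor:ContractionSimplicial} requires — namely, that the chordality hypothesis is inherited by every contraction (so the inductive step inside Proposition \ref{pro:SimplicialAndContractionsGivesShellable} is legitimately available), since a contraction of a contraction is again a minor of $\mathcal{C}$ and therefore again has a simplicial vertex.
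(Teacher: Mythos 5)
Your proposal is correct and is essentially the same argument the paper intends: the paper states this corollary without a separate proof, treating it as an immediate specialization of Corollary \ref{cor:ContractionSimplicial}, and you have simply filled in the one observation needed (contractions are minors, so chordality gives a simplicial vertex in every contraction, and contraction cannot increase circuit cardinality).
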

We also break out the statement of Corollary \ref{cor:ContractionSimplicial}
in the case where $\mathcal{C}$ is a graph. 
\begin{cor}
\label{cor:NbhdSimplGraph}If $G$ is a graph such that $G\setminus N[A]$
has a simplicial vertex for any independent set $A$, then $G$ is
vertex decomposable.
\end{cor}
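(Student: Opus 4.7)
The plan is to invoke Corollary~\ref{cor:ContractionSimplicial} with $\mathcal{C}=G$. Since every circuit of a graph has cardinality $2$, we have $k=2$, and the conclusion becomes $(k-2)=0$-decomposability, which is precisely vertex decomposability. Thus the task reduces to verifying the hypothesis of Corollary~\ref{cor:ContractionSimplicial}: that every contraction of $G$ has a simplicial vertex.

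First I would describe concretely what a contraction of $G$ looks like as a clutter. By iterating the formula $G/v = (G \setminus N[v]) \cup \{\{w\} : w \in N(v)\}$ given earlier (together with the commutation rules for deletion and contraction), a sequence of contractions corresponds to a choice of vertex subset $A \subseteq V(G)$. If $A$ contains two adjacent vertices of $G$, then at some intermediate stage we contract a vertex already carrying a singleton circuit, producing the degenerate clutter $\{\emptyset\}$, for which no further hypothesis is needed. Otherwise $A$ is independent in $G$, and the circuits of $G/A$ are exactly the singletons $\{w\}$ for $w \in N_G(A)$ together with the edges of the induced subgraph $G\setminus N[A]$; any $G$-edge meeting $N_G(A)$ is non-minimal, since it contains some singleton circuit.

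Next I would locate a simplicial vertex in each such $G/A$. If $V(G)\setminus N[A]$ is nonempty, the hypothesis of the corollary gives a simplicial vertex $v^*$ of the induced subgraph $G\setminus N[A]$. The circuits of $G/A$ that contain $v^*$ are precisely the edges of $G\setminus N[A]$ incident to $v^*$, so for any two such circuits $e_1=\{v^*,x_1\}$ and $e_2=\{v^*,x_2\}$ the simplicial property of $v^*$ in $G\setminus N[A]$ yields that $\{x_1,x_2\}$ is an edge of $G\setminus N[A]$, hence a circuit of $G/A$ contained in $(e_1\cup e_2)\setminus\{v^*\}$. In the boundary case $V(G)\setminus N[A]=\emptyset$, every circuit of $G/A$ is a singleton, so each vertex lies in exactly one circuit and is therefore vacuously simplicial.

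There is no real obstacle here; the step requiring most care is the concrete identification of contractions of a graph as clutters with singleton circuits along $N_G(A)$. Once that bookkeeping is done, transfer of the simplicial vertex is immediate because the family of circuits of $G/A$ through $v^*$ coincides with the family of edges of $G\setminus N[A]$ through $v^*$, and then Corollary~\ref{cor:ContractionSimplicial} delivers vertex decomposability of $I(G)$.
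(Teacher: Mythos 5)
Your proposal is correct and follows exactly the route the paper intends: Corollary~\ref{cor:NbhdSimplGraph} is presented in the paper as a specialization of Corollary~\ref{cor:ContractionSimplicial} to graphs, with no separate proof given, and you have supplied the routine bookkeeping (identifying contractions $G/A$ as $G\setminus N[A]$ augmented by singleton circuits on $N_G(A)$, checking that a simplicial vertex of $G\setminus N[A]$ stays simplicial in $G/A$, and disposing of the degenerate cases) that makes the specialization explicit.
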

The family of graphs given in Corollary \ref{cor:NbhdSimplGraph}
is a considerably more general family than that of chordal graphs,
including for example simplicial graphs \cite{Cheston/Hare/Hedetniemi/Laskar:1988},
the family of graphs considered in \cite[Theorem 3.2]{Francisco/Ha:2008},
etc.

\section{\label{sec:LinearResolutions}Linear resolutions and Alexander duality}

Recall that the complement $\overline{G}$ of a graph $G$ is the
graph with the same vertex set and with circuit set $\{xy\,:\, xy\not\in E(G)\}$.
We define the complement of a $d$-uniform clutter similarly for any
$d$. An important theorem in the algebraic combinatorics of a chordal
graph is:
\begin{thm}
\emph{(Fröberg \cite{Froberg:1990}) \label{thm:FrobergLinRes}} Let
$G$ be a graph. Then the circuit ideal of $G$ has a linear resolution
if and only if $\overline{G}$ is chordal.
\end{thm}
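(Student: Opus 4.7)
The plan is to prove the two directions separately, using Theorem \ref{thm:ClutterLinearRes} for the forward direction and a direct Hochster-formula computation for the converse.

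For the forward direction, suppose $\overline{G}$ is chordal as a graph. Viewing $\overline{G}$ as a $2$-uniform clutter, the paper's definition of chordal clutter specializes to ordinary graph chordality (as discussed in the example at the start of Section \ref{sec:ChordalClutters}), so $\overline{G}$ is a chordal $2$-uniform clutter. Now apply Theorem \ref{thm:ClutterLinearRes} with $d=2$ to $\mathcal{C}=\overline{G}$: the circuit ideal of the $2$-uniform complement of $\overline{G}$---which is $G$---has a linear resolution.

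For the converse, I would argue by contrapositive. Suppose $\overline{G}$ is not chordal; then it contains an induced cycle $C_k$ for some $k \geq 4$ on a vertex subset $W \subseteq V(G)$, whence $G|_W = \overline{C_k}$. By Hochster's formula the graded Betti numbers $\beta_{i,j}$ of $R/(\text{circuit ideal of }G|_W)$ form a subsum of those of $R/(\text{circuit ideal of }G)$ (taking only the $W' \subseteq W$ terms), so if the latter has a $2$-linear resolution (i.e., $\beta_{i,j}=0$ for all $i \geq 1$ and $j \neq i+1$) then so does the former. It thus suffices to show that the circuit ideal of $\overline{C_k}$ does \emph{not} have a $2$-linear resolution when $k \geq 4$.

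For this last step I would compute directly: the independence complex $I(\overline{C_k})$ has as faces the cliques of $C_k$, which for $k \geq 4$ are exactly the vertices and edges of $C_k$ (since $C_k$ is triangle-free). Hence $I(\overline{C_k})$ is just the $1$-dimensional complex $C_k$, homotopy equivalent to $S^1$, so $\tilde{H}_1(I(\overline{C_k})) \neq 0$. Applying Hochster's formula with $W' = W$ of size $k$ yields $\beta_{k-2,k} \neq 0$, which is off the linear strand since $k \neq (k-2)+1$---a contradiction. The main obstacle is cleanly justifying the restriction principle used in the converse; once that is granted via Hochster, the forward direction is immediate from Theorem \ref{thm:ClutterLinearRes} and the backward direction reduces to the explicit cycle computation.
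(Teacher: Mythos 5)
The paper does not actually prove Theorem~\ref{thm:FrobergLinRes} --- it is stated as a recalled result and attributed to Fr\"oberg \cite{Froberg:1990}; the paper's own contribution is to \emph{generalize} the ``if'' direction (as Theorem~\ref{thm:ClutterLinearRes} / Corollary~\ref{cor:dComplementChordal_LinRes}) via Alexander duality and vertex decomposability. So there is no in-paper proof to compare against, and the task reduces to checking your argument on its own merits.

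Your argument is correct, and it is essentially the standard textbook proof. For the forward (``if'') direction, the deduction from Theorem~\ref{thm:ClutterLinearRes} is legitimate: a chordal graph $\overline{G}$ is a chordal $2$-uniform clutter (as the paper notes, clutter contraction of a graph at $v$ is, up to singleton circuits, deletion of $N[v]$, so the clutter definition specializes to Dirac's characterization), and $c_2(\overline{G}) = G$, so Theorem~\ref{thm:ClutterLinearRes} with $d=2$ gives a linear resolution for the circuit ideal of $G$. There is no circularity, since the paper's proof of Theorem~\ref{thm:ClutterLinearRes} runs through Eagon--Reiner and Lemmas~\ref{lem:dComplementContraction}--\ref{lem:dComplementShedding} and does not invoke Fr\"oberg. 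For the converse, your Hochster computation is right, and in fact you can shorten it: you do not need the restriction principle as a separate step, since Hochster's formula for $R/I_G$ already has a summand indexed by $W' = W$ itself, giving $\beta_{k-2,\,k}(R/I_G) \geq \dim \tilde{H}_1\bigl(I(\overline{C_k})\bigr) = \dim \tilde{H}_1(C_k) = 1 \neq 0$ directly, which lies off the linear strand since $k \neq (k-2)+1$. The only cosmetic point worth flagging is that, as written, the forward direction leans on a theorem proved later in the paper; since that later proof is independent of Fr\"oberg, the logic is sound, but a reader would want that dependency made explicit.
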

In this section we generalize the ``if'' direction of Theorem \ref{thm:FrobergLinRes}
from chordal graphs to chordal clutters, in particular proving Theorem
\ref{thm:ClutterLinearRes}. We will first need to recall some facts
about Alexander duality.

\subsection{Review of Alexander duality}

The \emph{Alexander dual} of a simplicial complex $\Delta$ (denoted
$\alexdual{\Delta}$) is the simplicial complex with vertices $V=V(\Delta)$
and facets $\{V\setminus e\,:\, e\mbox{ a circuit of }\mathcal{C}(\Delta)\}$.
The vertex set that we consider $\Delta$ over has unusually great
importance in this definition, and if we wish to emphasize the vertex
set that we are operating over we will use a subscript, e.g. $\alexdual{\Delta_{V}}$.

The Alexander dual allows us to reduce the question of the existence
of a linear resolution to topological combinatorics:
\begin{thm}
\emph{\label{thm:Eagon-ReinerLinearRes} (Eagon and Reiner \cite[Theorem 3]{Eagon/Reiner:1998})
}Let $\Delta$ be a simplicial complex. The circuit ideal of $\Delta$
has a linear resolution if and only if $\alexdual{\Delta}$ is Cohen-Macaulay.
\end{thm}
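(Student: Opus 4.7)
The plan is to prove both implications simultaneously by translating each side of the equivalence into a vanishing condition on reduced homology of induced subcomplexes, so that the two conditions become literally the same family of statements. The central tool is Hochster's formula for the $\mathbb{Z}^{n}$-graded Betti numbers: for a simplicial complex $\Delta$ on vertex set $V$ with $|V|=n$ and any $W\subseteq V$,
\[
\beta_{i,W}(R/I_{\Delta})\;=\;\dim_{k}\tilde{H}_{|W|-i-1}(\Delta_{W};k),
\]
where $\Delta_{W}$ is the induced subcomplex on $W$ and $I_{\Delta}$ denotes the circuit ideal.

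First I would restate each side in homological language. Let $d$ be the minimum cardinality of a circuit of $\mathcal{C}(\Delta)$, so $I_{\Delta}$ is generated in degrees $\geq d$. Having a $d$-linear resolution is equivalent to $\beta_{i,j}(R/I_{\Delta})=0$ for all $i\geq 1$ and $j\neq i+d-1$; by Hochster's formula this reduces to $\tilde{H}_{|W|-i-1}(\Delta_{W};k)=0$ for every $W$ with $|W|\neq i+d-1$. On the other side, the facets of $\alexdual{\Delta}$ are the complements of the circuits of $\mathcal{C}(\Delta)$, so $\dim\alexdual{\Delta}=n-d-1$; by the Reisner condition built into the definition from Section \ref{sec:Background}, $\alexdual{\Delta}$ is Cohen-Macaulay iff $\tilde{H}_{j}(\link_{\alexdual{\Delta}}\sigma;k)=0$ for every face $\sigma$ of $\alexdual{\Delta}$ and every $j<n-d-1-|\sigma|$.

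Next I would bridge these two conditions via combinatorial Alexander duality. Two ingredients are needed. The first is the set-theoretic identity
\[
\alexdual{(\Delta_{W})}\;=\;\link_{\alexdual{\Delta}}(V\setminus W),
\]
which is a short unwinding of definitions: $\tau\subseteq W$ lies in $\alexdual{(\Delta_{W})}$ iff $W\setminus\tau\notin\Delta$, which is the same as $\tau\cup(V\setminus W)\in\alexdual{\Delta}$. The second is the classical topological Alexander duality $\tilde{H}_{i}(\Delta_{W};k)\cong\tilde{H}_{|W|-i-3}(\alexdual{(\Delta_{W})};k)$, valid over a field $k$ by universal coefficients. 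Composing these identifications converts each vanishing $\tilde{H}_{|W|-i-1}(\Delta_{W})=0$ from the linear-resolution side into a vanishing $\tilde{H}_{i-2}(\link_{\alexdual{\Delta}}(V\setminus W))=0$ on the Cohen-Macaulay side, and reparameterizing by $\sigma=V\setminus W$ matches the two families index-for-index, giving both directions of the equivalence at once.

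The main obstacle is the careful bookkeeping of index shifts: Hochster's formula, topological Alexander duality, and the Reisner condition each invert or shift homological degree relative to a different ambient quantity ($n$, $|W|$, or $|\sigma|$). The bulk of the work is verifying that under the substitution $|\sigma|=n-|W|$ together with $\dim\alexdual{\Delta}=n-d-1$, the ranges of vanishing on the two sides coincide, and that degenerate cases (such as $W$ itself being a face of $\Delta$, or $\sigma=\emptyset$) are handled consistently on both sides. Once the indices are aligned, the theorem is a direct translation.
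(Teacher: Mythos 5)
The paper does not prove this statement; it is quoted verbatim as a cited background result of Eagon and Reiner, so there is no ``paper's proof'' to compare against. That said, your outline is essentially the original Eagon--Reiner argument: Hochster's formula to turn linearity of the resolution into vanishing of $\tilde{H}_{\ast}(\Delta_{W})$ for induced subcomplexes, the identity $\alexdual{(\Delta_{W})}=\link_{\alexdual{\Delta}}(V\setminus W)$ (Lemma~\ref{lem:AlexDualitySummary}(3), iterated), topological Alexander duality over a field, and the Reisner criterion on the other side. The degree arithmetic does close up under $|\sigma|=n-|W|$ and $\dim\alexdual{\Delta}=n-d-1$, as you say.

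Two points you should make explicit before declaring victory. First, both sides of the equivalence force a uniformity/purity hypothesis that your sketch quietly assumes: a $d$-linear resolution forces $I_{\Delta}$ to be generated in the single degree $d$, i.e.\ $\mathcal{C}(\Delta)$ is $d$-uniform, which by Lemma~\ref{lem:AlexDualitySummary}(4) is exactly the condition that $\alexdual{\Delta}$ is pure of dimension $n-d-1$; conversely a Cohen--Macaulay complex is pure. You should either dispatch the non-uniform case up front (both conditions fail trivially) or carry purity as a standing hypothesis, since otherwise $\dim\link_{\alexdual{\Delta}}\sigma$ need not equal $n-d-1-|\sigma|$ and your Reisner reformulation is wrong. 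Second, after translation the linear-resolution condition reads $\tilde{H}_{j}(\link_{\alexdual{\Delta}}\sigma)=0$ for all $j\geq -1$ with $j\neq n-d-1-|\sigma|$, whereas Reisner asks only for $j<n-d-1-|\sigma|$; these match only because homology vanishes automatically above the dimension of the link and at $j=-1$ except for the $\{\emptyset\}$ link (which occurs exactly when $\sigma$ is a facet, where both conditions are vacuous), and because the quantification over non-faces $\sigma=V\setminus W$ gives the void complex with no homology. These degenerate checks are precisely the ``bookkeeping'' you flag, and they do go through, but they are the whole content of the proof, so they belong in the write-up rather than in a final sentence.
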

In particular, one approach to proving the ``if'' direction of Theorem
\ref{thm:FrobergLinRes} is as follows:
\begin{thm}
\emph{\label{thm:Eagon-ReinerVD} (Eagon and Reiner \cite[Proposition 8]{Eagon/Reiner:1998})}
If $G$ is a chordal graph, then $\alexdual{I(\overline{G})}$ is
vertex decomposable.
\end{thm}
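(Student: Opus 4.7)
The plan is to induct on $|V(G)|$. The base case $|V|\le 1$ is trivial since $\alexdual{I(\overline{G})}$ is then the degenerate complex $\{\}$. Write $V=V(G)$ and $\Delta=I(\overline{G})$. Since $\Delta$ is the clique complex of $G$, its minimal non-faces are exactly the non-edges of $G$; hence the facets of $\alexdual{\Delta_V}$ are the sets $V\setminus\{x,y\}$ as $\{x,y\}$ ranges over non-edges of $G$. By Dirac's theorem $G$ has a simplicial vertex $v$; I will show that $v$ is a shedding vertex of $\alexdual{\Delta_V}$ and that both $\link_{\alexdual{\Delta_V}}v$ and $\alexdual{\Delta_V}\setminus v$ are vertex decomposable.

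To verify that $v$ is shedding I use the characterization from the remark after Definition~\ref{def:SheddingFace}: no facet of $(\st_{\alexdual{\Delta_V}} v)\setminus v$ is a facet of $\alexdual{\Delta_V}\setminus v$. A facet of $(\st_{\alexdual{\Delta_V}} v)\setminus v$ has the form $V\setminus\{v,x,y\}$ for a non-edge $\{x,y\}$ of $G$ with $v\notin\{x,y\}$; on the other hand, each non-neighbor $y'$ of $v$ in $G$ contributes a facet $V\setminus\{v,y'\}$ of $\alexdual{\Delta_V}\setminus v$, and the inclusion $V\setminus\{v,x,y\}\subseteq V\setminus\{v,y'\}$ holds iff $y'\in\{x,y\}$. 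Thus $V\setminus\{v,x,y\}$ could be a facet of the deletion only when $x,y\in N(v)$, which would force the non-edge $\{x,y\}$ to lie inside $N(v)$. Since $v$ is simplicial, $N(v)$ is a clique and contains no non-edge, so no such facet exists.

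For the link, direct computation from the definitions gives $\link_{\alexdual{\Delta_V}}v=\alexdual{(\Delta\setminus v)_{V\setminus v}}$, where $\Delta\setminus v=I(\overline{G\setminus v})_{V\setminus v}$. Since $G\setminus v$ is chordal, the inductive hypothesis gives vertex decomposability. The argument above actually pins down the deletion exactly: simpliciality of $v$ forces the facets of $\alexdual{\Delta_V}\setminus v$ to be precisely the sets $V\setminus\{v,y\}$ as $y$ ranges over $V\setminus N[v]$. Setting $A=N(v)$ and $B=V\setminus N[v]$, so that $V\setminus v=A\disjointunion B$, a subset of $V\setminus v$ is a face iff it fails to contain $B$, which identifies $\alexdual{\Delta_V}\setminus v$ with the join of the full simplex on $A$ and the boundary of the simplex on $B$. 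Each factor is vertex decomposable (the boundary of a simplex is a polytope boundary, easily verified directly by induction on $|B|$), so Proposition~\ref{pro:JoinOfKdecIsKdec} with $k=0$ closes the induction.

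The main obstacle is identifying the deletion: once one sees that simpliciality of $v$ both supplies the shedding condition and suppresses the ``residual'' facets $V\setminus\{v,x,y\}$ that might otherwise appear in $\alexdual{\Delta_V}\setminus v$, the join decomposition is transparent, and the link identification and the shedding verification become bookkeeping.
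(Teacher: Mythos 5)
Your argument takes essentially the same shape as the paper's proof of the more general Theorem~\ref{thm:dComplementChordal_VD} (of which the present theorem is the $d=2$ graph case): pick a simplicial vertex $v$, verify the exchange condition for $v$, identify $\link_{\alexdual{\Delta}}v$ with $\alexdual{I(\overline{G\setminus v})}$, and treat the deletion separately. The one genuine difference is in the treatment of the deletion: the paper uses Lemma~\ref{lem:dComplementContraction} to write $\alexdual{\Delta}\setminus v$ as $\alexdual{I(c_{1}(G/v))}$ and inducts on the minimum circuit cardinality $d$ as well as on $|V|$, whereas you identify $\alexdual{\Delta}\setminus v$ in closed form as (simplex on $N(v)$) $*$ (boundary of the simplex on $V\setminus N[v]$) and invoke Proposition~\ref{pro:JoinOfKdecIsKdec} plus Lemma~\ref{lem:VertDecSkeleton}--type base facts. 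Your closed-form identification is correct and is a pleasant streamlining in the graph case; the paper's version is set up to generalize to higher $d$.

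There is, however, a gap in the case analysis. You assert that $v$ is a shedding vertex, but a shedding face must in particular be a face of the complex. Here $\{v\}$ is a face of $\alexdual{\Delta}$ if and only if $v$ lies in some facet $V\setminus\{x,y\}$, i.e.\ if and only if $G$ has a non-edge avoiding $v$, i.e.\ if and only if $G\setminus v$ is not complete. When every non-edge of $G$ contains $v$, the vertex $v$ is in no face of $\alexdual{\Delta}$, and $v$ cannot be used as a shedding vertex; moreover $\alexdual{\Delta}\setminus v=\alexdual{\Delta}$, so the recursion as you have phrased it makes no progress. This case cannot be avoided by choosing a different simplicial vertex: in $K_n$ minus an edge $\{a,b\}$, the only simplicial vertices are $a$ and $b$, and for each the deletion is complete. (Similarly, the case $G$ complete, giving $\alexdual{\Delta}=\{\}$, falls outside your base case $|V|\le 1$.) The paper handles this by an explicit dichotomy in the proof of Theorem~\ref{thm:dComplementChordal_VD}: either $\mathcal{C}\setminus v$ has a $d$-non-circuit (so $v$ is shedding), or else $v$ lies in every circuit of the complement, so that $\alexdual{\Delta}=\alexdual{\Delta}\setminus v$ and one concludes directly. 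The fix to your write-up is the same and is routine --- your closed-form identification of $\alexdual{\Delta}\setminus v$ as a join already shows it is vertex decomposable, and in the bad case this complex is all of $\alexdual{\Delta}$ --- but as written the argument is incomplete.
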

If $\alexdual{\Delta}$ is shellable, then the circuit ideal is said
to have \emph{linear quotients}.
\begin{rem}
Theorem \ref{thm:Eagon-ReinerLinearRes} tells us that classifying
the circuit ideals with linear resolution is equivalent to classifying
all Cohen-Macaulay complexes, which is likely intractable. Finding
large classes of circuit ideals with linear resolutions remains an
interesting problem.
\end{rem}
We recall some standard facts about Alexander duality \cite{Cornuejols:2001,Miller/Sturmfels:2005}:
\begin{lem}
\label{lem:AlexDualitySummary}If $\Delta$ is any simplicial complex
on vertex set $V$ then
\begin{enumerate}
\item $\tilde{H}_{i}(\Delta)\cong\tilde{H}^{\vert V\vert-i-3}(\alexdual{\Delta})$.
\item $\alexdual{(\alexdual{\Delta})}=\Delta$. 
\item $\alexdual{(\Delta\setminus v)}=\link_{\alexdual{\Delta}}v$ and $\alexdual{(\link_{\Delta}v)}=\alexdual{\Delta}\setminus v$.
\item $\Delta$ is pure of dimension $d$ if and only if $\mathcal{C}(\alexdual{\Delta})$
is $(\vert V\vert-d-1)$-uniform.
\end{enumerate}
\end{lem}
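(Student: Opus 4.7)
The plan is to handle parts (2), (3), and (4) by a single combinatorial reformulation, and to defer (1) to the classical combinatorial Alexander duality theorem, which is the genuine content. The key reformulation is that for any $\tau\subseteq V$,
\[
\tau\in\alexdual{\Delta}\iff V\setminus\tau\notin\Delta.
\]
This is immediate from the definition: $\tau\in\alexdual{\Delta}$ iff $\tau\subseteq V\setminus e$ for some minimal non-face $e$ of $\Delta$, iff $e\subseteq V\setminus\tau$ for some such $e$, iff $V\setminus\tau$ fails to be a face of $\Delta$.

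With this in hand, (2) falls out: $\sigma\in\alexdual{(\alexdual{\Delta})}$ iff $V\setminus\sigma\notin\alexdual{\Delta}$ iff $V\setminus(V\setminus\sigma)=\sigma\in\Delta$. For (3), note that $\Delta\setminus v$ is viewed as a complex on $V\setminus\{v\}$, so a subset $\tau\subseteq V\setminus\{v\}$ is in $\alexdual{(\Delta\setminus v)}$ iff $(V\setminus\{v\})\setminus\tau\notin\Delta\setminus v$; since this complement contains no $v$, being in $\Delta\setminus v$ is the same as being in $\Delta$, so the condition rewrites as $V\setminus(\tau\cup\{v\})\notin\Delta$, i.e.\ $\tau\cup\{v\}\in\alexdual{\Delta}$ and $v\notin\tau$, which is precisely $\tau\in\link_{\alexdual{\Delta}}v$. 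The second equation of (3) follows by applying the first to $\alexdual{\Delta}$ and using (2). For (4), the reformulation identifies facets of $\Delta$ with minimal non-faces of $\alexdual{\Delta}$ via $\sigma\mapsto V\setminus\sigma$: indeed, $\sigma$ is a facet of $\Delta$ iff $\sigma\in\Delta$ and $\sigma\cup\{v\}\notin\Delta$ for every $v\notin\sigma$, iff $V\setminus\sigma\notin\alexdual{\Delta}$ yet every proper subset of $V\setminus\sigma$ lies in $\alexdual{\Delta}$. This bijection sends cardinality $d+1$ to cardinality $\vert V\vert-d-1$, so purity of dimension $d$ is equivalent to $\mathcal{C}(\alexdual{\Delta})$ being $(\vert V\vert-d-1)$-uniform.

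Part (1) is the main obstacle and is the combinatorial Alexander duality theorem proper. The cleanest route is to exhibit an explicit degree-shifted chain-level isomorphism between the reduced simplicial chain complex of $\Delta$ and the reduced simplicial cochain complex of $\alexdual{\Delta}$: the generator for an $i$-face $\sigma\in\Delta$ is matched, up to a carefully chosen sign, with the dual generator of $V\setminus\sigma$ in cohomological degree $\vert V\vert-i-3$, and one checks that the boundary on $\Delta$ intertwines with the coboundary on $\alexdual{\Delta}$ under this identification. The delicate (and only nontrivial) point is pinning down the signs and the index shift $\vert V\vert-i-3$, where the $-3$ reflects one shift from complementation in the full $(\vert V\vert-1)$-simplex together with the reduced (co)homology conventions on both sides. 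Rather than rederive this standard calculation, I would invoke the treatment in Miller--Sturmfels or B\"jorner--Tancer, since (1) is used only as a black-box input for the arguments of Section~\ref{sec:LinearResolutions}.
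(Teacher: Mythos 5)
The paper offers no proof of this lemma; it records the four statements as standard facts with references to \cite{Cornuejols:2001,Miller/Sturmfels:2005}, so there is no internal argument to compare against. Your reformulation $\tau\in\alexdual{\Delta}\iff V\setminus\tau\notin\Delta$ is the standard characterization of the Alexander dual and is correct directly from the definition in the paper, and your derivations of (2)--(4) from it are sound, including the necessary attention to vertex sets in part (3) (where $\Delta\setminus v$, $\link_{\Delta}v$, and the corresponding duals all live on $V\setminus\{v\}$). Treating part (1) as the only item with genuine topological content and deferring it to the literature is exactly the paper's own stance.
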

\begin{rem}
The Alexander dual has been studied in topological combinatorics at
least as far back as \cite[Section 6]{Kalai:1983}. It has also been
studied in the context of combinatorial optimization under the name
\emph{blocker} or \emph{transversal}, and it is in this context that
Lemma \ref{lem:AlexDualitySummary} parts (2) and (3) were first observed.
We refer the reader to \cite{Cornuejols:2001} for further background
and references from the combinatorial optimization point of view,
or to \cite{Miller/Sturmfels:2005} from the algebraic combinatorics
point of view.
\end{rem}

\subsection{Alexander duals of complements to chordal clutters}

If $\mathcal{C}$ is a clutter, then define $c_{d}(\mathcal{C})$
to be the clutter with the same vertex set $V$ as $\mathcal{C}$
and circuit set $\{e\subseteq V\,:\,\vert e\vert=d,\, e\mbox{ not a circuit of }\mathcal{C}\}.$
In the special case that $\mathcal{C}$ is $d$-uniform, this is the
complement\emph{ }of $\mathcal{C}$. We refer to the circuits of $c_{d}(\mathcal{C})$
as $d$-non-circuits of $\mathcal{C}$. 

We start by relating contraction in $\mathcal{C}$ with contraction
in $c_{d}(\mathcal{C})$:
\begin{lem}
\label{lem:dComplementContraction} Let $\mathcal{C}$ be a clutter
with no circuits of cardinality $(d-1)$, and $v$ be a simplicial
vertex. Then $c_{d}(\mathcal{C})/v=c_{d-1}(\mathcal{C}/v)$.\end{lem}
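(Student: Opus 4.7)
My plan is to prove equality of the two clutters (both of which live on the common vertex set $W = V(\mathcal{C}) \setminus \{v\}$) by double inclusion of circuit families. The chief obstacle, I expect, will be the direction $c_d(\mathcal{C})/v \subseteq c_{d-1}(\mathcal{C}/v)$, where I must rule out the possibility that a minimal element of $\{e \setminus \{v\} : e \in c_d(\mathcal{C})\}$ has cardinality $d$; this is precisely the step that will consume both the simplicial vertex property and the no-$(d-1)$-circuits hypothesis.

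The inclusion $c_{d-1}(\mathcal{C}/v) \subseteq c_d(\mathcal{C})/v$ I would dispatch first. Let $f$ be a circuit of the left-hand side, so $|f| = d - 1$, $f \subseteq W$, and $f$ is not a circuit of $\mathcal{C}/v$. I would first argue that $f \cup \{v\}$ is not a circuit of $\mathcal{C}$: otherwise, $f = (f \cup \{v\}) \setminus \{v\}$ lies in the pre-minimal family defining $\mathcal{C}/v$, and is in fact minimal there, because any strictly smaller $g = e' \setminus \{v\}$ (for $e' \in \mathcal{C}$) would yield a circuit $e'$ strictly nested inside $f \cup \{v\}$, in violation of the clutter axiom. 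That would force $f$ to be a circuit of $\mathcal{C}/v$, contradicting the assumption. Hence $f \cup \{v\} \in c_d(\mathcal{C})$, so $f$ sits in the pre-minimal family defining $c_d(\mathcal{C})/v$; $f$ is automatically minimal there, since every element of that family has cardinality at least $d - 1 = |f|$.

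For the reverse inclusion, let $f$ be a circuit of $c_d(\mathcal{C})/v$, realized as $f = e \setminus \{v\}$ with $e \in c_d(\mathcal{C})$ and $f$ minimal in the pre-minimal family. The key step is to show $v \in e$, equivalently $|f| = d - 1$. Suppose otherwise, so $f = e$ has cardinality $d$. Then for every $x \in f$ the set $e_x := (f \setminus \{x\}) \cup \{v\}$ must be a circuit of $\mathcal{C}$, since otherwise $e_x$ would belong to $c_d(\mathcal{C})$ and removing $v$ would yield $f \setminus \{x\} \subsetneq f$ in the pre-minimal family, contradicting minimality. Applying the simplicial vertex property to $e_{x_1}$ and $e_{x_2}$ produces a circuit $e_3 \subseteq (e_{x_1} \cup e_{x_2}) \setminus \{v\} = f$ of $\mathcal{C}$; since $f$ itself is not a circuit, $e_3 \subsetneq f$, and the no-$(d-1)$-circuits hypothesis forces $|e_3| \leq d - 2$. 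Hence some $x \in f$ lies outside $e_3$, and $e_3 \subseteq f \setminus \{x\} \subsetneq e_x$ exhibits two strictly nested circuits of $\mathcal{C}$, the desired contradiction. Once $|f| = d - 1$ is in hand, verifying that $f$ is not a circuit of $\mathcal{C}/v$ is a brief case analysis: a witnessing circuit $e' \in \mathcal{C}$ with $e' \setminus \{v\} = f$ either equals $f$ itself (a forbidden $(d-1)$-circuit) or equals $f \cup \{v\} = e$ (contradicting $e \in c_d(\mathcal{C})$).
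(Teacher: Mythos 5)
Your proof is correct and takes essentially the same approach as the paper: both arguments hinge on showing, via the simplicial vertex property, that a $d$-non-circuit $f$ of $\mathcal{C}$ not containing $v$ cannot be minimal in the pre-minimal family for $c_d(\mathcal{C})/v$ (the paper phrases this as the induced subclutter on $f\cup\{v\}$ being $\mathcal{K}_{d+1}^{d}\setminus\{f\}$, which violates simpliciality; you unwind that directly), and both use the no-$(d-1)$-circuits hypothesis to identify the $(d-1)$-circuits of $\mathcal{C}/v$ with the $(d-1)$-sets $f$ for which $f\cup\{v\}\in\mathcal{C}$. The only difference is organizational: you argue by double inclusion where the paper directly characterizes both sides as $\{f : |f|=d-1,\ f\cup\{v\}\text{ not a circuit of }\mathcal{C}\}$. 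One small observation: your invocation of the no-$(d-1)$-circuits hypothesis to force $|e_3|\le d-2$ is superfluous, since even $|e_3|=d-1$ would give $e_3=f\setminus\{x\}\subsetneq e_x$ and hence the same nesting contradiction; the hypothesis is genuinely needed only in your final case analysis (ruling out $e'=f$).
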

\begin{proof}
Suppose by contradiction that $e$ is a $d$-non-circuit of $\mathcal{C}$
with $v\notin e$, and that $e$ is the only such non-circuit contained
in the set $e\cup v$. Then the induced subclutter of $\mathcal{C}$
on the set $e\cup\{v\}$ is a complete clutter with one circuit removed
($\mathcal{K}_{d+1}^{d}\setminus\{e\}$), which contradicts the hypothesis
that $v$ is simplicial. It follows that every $d$-non-circuit of
$\mathcal{C}$ contains a $(d-1)$-set $e'$ which is a circuit of
$c_{d}(\mathcal{C})/v$, i.e. such that $e'\cup\{v\}$ is a non-circuit
of $\mathcal{C}$. Thus such $e'$ are precisely the circuits of $c_{d}(\mathcal{C})/v$.

Because there are no circuits with $d-1$ vertices in $\mathcal{C}$,
the $(d-1)$-circuits of $\mathcal{C}/v$ are exactly the sets $e$
with $e\cup\{v\}$ a circuit of $\mathcal{C}$. We have that 
\[
\{e\,:\,\vert e\vert=d-1,\, e\cup\{v\}\mbox{ a non-circuit of }\mathcal{C}\}=c_{d-1}(\mathcal{C}/v)=c_{d}(\mathcal{C})/v.\qedhere
\]

\end{proof}
Notice that $\mathcal{C}/v$ is in general not a uniform clutter,
even if the starting clutter $\mathcal{C}$ was uniform. It is for
this reason that we work with $c_{d}$, which is defined for every
clutter, rather than with a more straightforward complement of $d$-uniform
clutters.
\begin{lem}
\label{lem:dComplementShedding} If $v$ is a simplicial vertex of
a clutter $\mathcal{C}$ such that $\mathcal{C}\setminus v$ has at
least one $d$-non-circuit ($d\geq2$), then $v$ is a shedding vertex
in $\alexdual{I(c_{d}(\mathcal{C}))}$.\end{lem}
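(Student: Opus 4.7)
The plan is to translate the shedding condition at $v$, via the Alexander duality dictionary, into a concrete combinatorial condition on the $d$-non-circuits of $\mathcal{C}$, and then verify that condition using simpliciality of $v$ together with the defining axiom of clutters.

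Write $V = V(\mathcal{C})$ and $\Delta = \alexdual{I(c_d(\mathcal{C}))}$. By Lemma \ref{lem:AlexDualitySummary}(4), $\Delta$ is pure of dimension $|V| - d - 1$, with facets $V \setminus e$ as $e$ ranges over the $d$-non-circuits of $\mathcal{C}$. The hypothesis that $\mathcal{C} \setminus v$ has at least one $d$-non-circuit is precisely what makes $v$ a vertex of $\Delta$. Using the remark following Definition \ref{def:SheddingFace} (together with purity), $v$ is a shedding vertex of $\Delta$ if and only if for every facet $F = V \setminus e$ of $\Delta$ containing $v$ — equivalently, every $d$-non-circuit $e$ with $v \notin e$ — there exists $w \in V \setminus F$ such that $(F \setminus \{v\}) \cup \{w\}$ is a face (necessarily a facet, by purity). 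Taking complements in $V$, this is the condition: for every $d$-non-circuit $e$ of $\mathcal{C}$ with $v \notin e$, there is some $w \in e$ for which $(e \setminus \{w\}) \cup \{v\}$ is also a $d$-non-circuit of $\mathcal{C}$.

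I would verify this by contradiction. Suppose that some $d$-non-circuit $e$ with $v \notin e$ has $(e \setminus \{w\}) \cup \{v\}$ a circuit of $\mathcal{C}$ for every $w \in e$. Since $d \geq 2$, pick distinct $w_1, w_2 \in e$ and set $e_i = (e \setminus \{w_i\}) \cup \{v\}$, which are circuits of $\mathcal{C}$ both containing the simplicial vertex $v$. The simplicial axiom supplies a circuit $e_3$ of $\mathcal{C}$ with $e_3 \subseteq (e_1 \cup e_2) \setminus \{v\} = e$. If $e_3 = e$, then $e$ itself is a circuit, contradicting the choice of $e$ as a non-circuit. Otherwise $e_3 \subsetneq e$; picking any $w_0 \in e \setminus e_3$, we obtain $e_3 \subseteq e \setminus \{w_0\} \subsetneq (e \setminus \{w_0\}) \cup \{v\}$, exhibiting a circuit $e_3$ strictly contained in the circuit $(e \setminus \{w_0\}) \cup \{v\}$, in violation of the defining axiom of a clutter.

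The main delicate point is the second case: one must choose $w_0 \in e \setminus e_3$ so that $e_3$ actually misses an element of $e$, and then exploit the standing assumption that \emph{every} swap $(e \setminus \{w\}) \cup \{v\}$ is a circuit — in particular for $w = w_0$ — to produce a circuit strictly containing $e_3$. Beyond this, the argument is essentially bookkeeping with the complementation dictionary between facets of $\Delta$ and $d$-non-circuits of $\mathcal{C}$.
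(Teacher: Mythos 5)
Your argument is correct and takes essentially the same route as the paper's: translate the shedding condition at $v$ via Alexander duality into the requirement that every $d$-non-circuit $e$ with $v\notin e$ admits some $w\in e$ for which $(e\setminus\{w\})\cup\{v\}$ is again a $d$-non-circuit, then deduce this from the simplicial property of $v$ together with the clutter (antichain) axiom. You are, however, more careful than the paper at one step. The paper takes two arbitrary $w_1,w_2\in e$, assumes $e_1=(e\setminus\{w_1\})\cup\{v\}$ and $e_2=(e\setminus\{w_2\})\cup\{v\}$ are both circuits, and asserts that simpliciality then forces $e=(e_1\cup e_2)\setminus\{v\}$ itself to be a circuit. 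But the simplicial axiom only yields a circuit $e_3\subseteq e$, which may be proper; if $w_1,w_2\in e_3\subsetneq e$ one cannot conclude from $e_1,e_2$ alone. (This is harmless in the intended applications, Theorem \ref{thm:dComplementChordal_VD} and the complete-neighborhood proposition, where $d$ equals the minimum circuit cardinality, so $|e_3|\geq d=|e|$ forces $e_3=e$; but the lemma as stated carries no such hypothesis.) Your version, which supposes for contradiction that \emph{every} swap $(e\setminus\{w\})\cup\{v\}$ is a circuit and, in the case $e_3\subsetneq e$, picks $w_0\in e\setminus e_3$ to exhibit $e_3\subsetneq(e\setminus\{w_0\})\cup\{v\}$ in violation of the clutter axiom, closes this gap cleanly and establishes the lemma as stated.
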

\begin{proof}
Suppose that $\sigma$ is a facet of $\link_{\alexdual{I(c_{d}(\mathcal{C}))}}v$,
so that $\sigma=\left(V\setminus e\right)\setminus\{v\}$ for some
$d$-non-circuit $e$ not containing $v$. (Such a facet exists by
the condition requiring $\mathcal{C}\setminus v$ to have at least
one $d$-non-circuit.) Since $d\geq2$ there are vertices $w_{1},w_{2}\in e$,
and we let $e_{i}$ be the set $(e\setminus w_{i})\cup\{v\}$. 

If both $e_{1}$ and $e_{2}$ are circuits of $\mathcal{C}$, then
$(e_{1}\cup e_{2})\setminus v=e$ is also a circuit, a contradiction;
so at least one $e_{i}$ is a $d$-non-circuit. But then $\tau=V\setminus e_{i}$
is a facet of $\alexdual{I(c_{d}(\mathcal{C}))}\setminus\{v\}$ with
$\tau=\sigma\cup\{w_{i}\}$, meeting the requirement for a shedding
vertex.
\end{proof}
We are now ready to prove:
\begin{thm}
\label{thm:dComplementChordal_VD}If $\mathcal{C}$ is a chordal clutter
with minimum circuit cardinality $d$, then $\alexdual{I(c_{d}(\mathcal{C}))}$
is vertex decomposable.\end{thm}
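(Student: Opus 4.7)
The plan is to prove this by induction on $|V(\mathcal{C})|$, after slightly strengthening the claim: it suffices to show that if $\mathcal{C}$ is a chordal clutter whose minimum circuit cardinality is \emph{at least} $d$, then $\alexdual{I(c_{d}(\mathcal{C}))}$ is vertex decomposable. This strengthening is forced on us because deletions and contractions of $\mathcal{C}$ need not preserve the exact minimum circuit cardinality, and the induction has to survive those jumps.

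First I would dispose of two degenerate configurations. If $\mathcal{C}$ has no $d$-non-circuit, then $c_{d}(\mathcal{C})$ is the empty clutter and $\alexdual{I(c_{d}(\mathcal{C}))}=\{\}$ is vertex decomposable by convention. If the minimum circuit cardinality of $\mathcal{C}$ is strictly greater than $d$, then $c_{d}(\mathcal{C})$ is the complete $d$-uniform clutter on $V(\mathcal{C})$, so $I(c_{d}(\mathcal{C}))$ is the $(d-2)$-skeleton of the simplex on $V(\mathcal{C})$, and its Alexander dual is the $(|V(\mathcal{C})|-d-1)$-skeleton of the same simplex, vertex decomposable by Lemma~\ref{lem:VertDecSkeleton}.

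The main inductive step is when the minimum circuit cardinality equals $d$ exactly and at least one $d$-non-circuit exists. Chordality hands us a simplicial vertex $v$, and combining Lemma~\ref{lem:AlexDualitySummary}(3) with the clutter--complex dictionary $I(\mathcal{C}/v)=\link_{I(\mathcal{C})}v$ and $I(\mathcal{C}\setminus v)=I(\mathcal{C})\setminus v$ yields
\[
\alexdual{I(c_{d}(\mathcal{C}))}\setminus v \;=\; \alexdual{I(c_{d}(\mathcal{C})/v)} \;=\; \alexdual{I(c_{d-1}(\mathcal{C}/v))},
\]
where the last equality is Lemma~\ref{lem:dComplementContraction} (applicable since $\mathcal{C}$ has no circuit of size $d-1$), and
\[
\link_{\alexdual{I(c_{d}(\mathcal{C}))}}v \;=\; \alexdual{I(c_{d}(\mathcal{C}\setminus v))}.
\]
If $\mathcal{C}\setminus v$ has a $d$-non-circuit, Lemma~\ref{lem:dComplementShedding} declares $v$ a shedding vertex of the dual, and both the deletion and the link are of the form $\alexdual{I(c_{d'}(\mathcal{C}'))}$ for a chordal minor $\mathcal{C}'$ on fewer vertices with minimum circuit cardinality at least $d'$, so the strengthened induction hypothesis covers them. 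If $\mathcal{C}\setminus v$ has no $d$-non-circuit, then every $d$-non-circuit of $\mathcal{C}$ contains $v$; hence $v$ lies in no face of the dual complex, the dual coincides (as an abstract complex) with $\alexdual{I(c_{d-1}(\mathcal{C}/v))}$, and induction applied to the chordal minor $\mathcal{C}/v$ finishes the case.

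The chief difficulty is the bookkeeping of minimum circuit cardinality under passage to minors: contracting a simplicial vertex may drop the minimum from $d$ to $d-1$ (if $v$ lies in a $d$-circuit) or may leave it at $d$ (otherwise), while deleting may push it above $d$ (if $v$ meets every $d$-circuit). The strengthened hypothesis ``minimum cardinality at least $d$'' absorbs every such fluctuation, and the initial two-pronged reduction to the void complex and to skeletons provides the genuine base cases onto which the induction lands.
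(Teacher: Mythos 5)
Your proof follows the paper's approach closely: pick a simplicial vertex $v$, identify the link and deletion of $v$ in $\alexdual{I(c_{d}(\mathcal{C}))}$ with $\alexdual{I(c_{d}(\mathcal{C}\setminus v))}$ and $\alexdual{I(c_{d-1}(\mathcal{C}/v))}$ via Lemmas \ref{lem:AlexDualitySummary} and \ref{lem:dComplementContraction}, invoke Lemma \ref{lem:dComplementShedding} when $\mathcal{C}\setminus v$ has a $d$-non-circuit, and otherwise observe that the dual does not see $v$. Strengthening the inductive hypothesis to ``minimum circuit cardinality at least $d$'' and dispatching the surplus case (minimum cardinality $>d$, so $c_{d}(\mathcal{C})$ is the complete $d$-uniform clutter and the dual is a skeleton of a simplex, vertex decomposable by Lemma \ref{lem:VertDecSkeleton}) is a genuine clarification: the paper's ``by induction'' is silent about what happens when a minor's minimum cardinality jumps above the running parameter, and your bookkeeping closes that loophole cleanly.

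There is, however, a gap at $d=1$. Lemma \ref{lem:dComplementShedding} explicitly requires $d\geq 2$ --- its proof chooses two distinct vertices $w_{1},w_{2}$ of a $d$-non-circuit $e$ --- so your ``main inductive step'' cannot run when the parameter equals $1$. And this case is reached: starting from a chordal clutter with minimum cardinality $2$ (a chordal graph, say), the contraction step lowers the parameter to $d-1=1$, and $\mathcal{C}/v$ can easily have both a singleton circuit and a singleton non-circuit simultaneously, which lands you squarely in your ``main'' case with $d'=1$ and no applicable shedding lemma. The paper supplies the missing base case: when $d=1$ and $c_{1}(\mathcal{C})$ has a circuit, the facets of $\alexdual{I(c_{1}(\mathcal{C}))}$ are a collection of codimension-$1$ faces of the simplex on $V$, which is vertex decomposable. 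You should add this base case; with it in place your argument is complete.
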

\begin{proof}
We proceed by induction, with base cases as follows: If $c_{d}(\mathcal{C})$
has no circuits, then $\alexdual{I(c_{d}(\mathcal{C}))}$ is the degenerate
complex $\{\}$, which we defined to be vertex decomposable. If $d=1$
and there is a circuit in $c_{d}(\mathcal{C})$, then the facets of
$\alexdual{I(c_{d}(\mathcal{C}))}$ are some collection of codimension
$1$ faces of a simplex, hence vertex decomposable \cite[proof of Proposition 8]{Eagon/Reiner:1998}.

For $d>1$, let $v$ be a simplicial vertex of $\mathcal{C}$. Then
\[
\link_{\alexdual{I(c_{d}(\mathcal{C}))}}v=\alexdual{I(c_{d}(\mathcal{C})\setminus v)}=\alexdual{I(c_{d}(\mathcal{C}\setminus v))}
\]
 is vertex decomposable by induction, and 
\[
\alexdual{I(c_{d}(\mathcal{C}))}\setminus v=\alexdual{I(c_{d}(\mathcal{C})/v)}=\alexdual{I(c_{d-1}(\mathcal{C}/v))}
\]
 is vertex decomposable by induction with Lemma \ref{lem:dComplementContraction}
and minimality of $d$.

If $\mathcal{C}\setminus v$ has a $d$-non-circuit, then $v$ is
a shedding vertex by Lemma \ref{lem:dComplementShedding}, hence $\alexdual{I(c_{d}(\mathcal{C}))}$
is vertex decomposable. Otherwise, $v$ is contained in every circuit
of $c_{d}(\mathcal{C})$, hence in no facet of $\alexdual{I(c_{d}(\mathcal{C}))}$,
so that $\alexdual{I(c_{d}(\mathcal{C}))}=\alexdual{I(c_{d}(\mathcal{C}))}\setminus v$,
which is vertex decomposable by induction.
\end{proof}
We have proved the following generalization of Theorem \ref{thm:ClutterLinearRes}.
\begin{cor}
\label{cor:dComplementChordal_LinRes}If $\mathcal{C}$ is a chordal
clutter with minimum circuit cardinality $d$, then the circuit ideal
of $c_{d}(\mathcal{C})$ has linear quotients, hence a linear resolution.
\end{cor}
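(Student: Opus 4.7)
The plan is to derive this corollary directly from Theorem~\ref{thm:dComplementChordal_VD}, which has already done all of the combinatorial heavy lifting. The steps are essentially a short chain of implications, none of which requires new construction.

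First, I would invoke Theorem~\ref{thm:dComplementChordal_VD} to conclude that $\alexdual{I(c_d(\mathcal{C}))}$ is vertex decomposable. Next, I would note that vertex decomposability implies shellability: this is the $k=0$ case of Theorem~\ref{thm:dDimlShellableIsdDecomp} (equivalently, it follows by iteratively applying Lemma~\ref{lem:SheddingFaceShelling} with $\sigma$ a single shedding vertex at each stage). Thus $\alexdual{I(c_d(\mathcal{C}))}$ is shellable.

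Now I would appeal to the definition, recorded in the paragraph immediately after Theorem~\ref{thm:Eagon-ReinerVD}, that shellability of $\alexdual{\Delta}$ is precisely what it means for the circuit ideal of $\Delta$ to have linear quotients. Applied to $\Delta = I(c_d(\mathcal{C}))$, this gives that the circuit ideal of $c_d(\mathcal{C})$ has linear quotients.

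Finally, I would recall the standard fact that an ideal generated in a single degree whose minimal generators admit linear quotients has a linear resolution (see, e.g., Herzog--Takayama or the reference recommended in Section~\ref{sec:LinearResolutions}); alternatively, this follows from Theorem~\ref{thm:Eagon-ReinerLinearRes} together with the fact that a shellable complex is Cohen--Macaulay after passing to the pure top skeleton and here the relevant top skeleton coincides with $\alexdual{I(c_d(\mathcal{C}))}$ since $c_d(\mathcal{C})$ is $d$-uniform (so by Lemma~\ref{lem:AlexDualitySummary}(4) the dual is pure). There is no real obstacle here: every step is either cited from the literature or already established in the paper, and the only mild care needed is verifying purity of the Alexander dual so that the shellability-to-Cohen--Macaulayness transition goes through cleanly.
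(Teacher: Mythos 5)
Your proposal is correct and matches the paper's (implicit) argument: the paper states the corollary immediately after Theorem~\ref{thm:dComplementChordal_VD} with only the remark ``We have proved the following generalization,'' so the intended chain is exactly vertex decomposable $\Rightarrow$ shellable $\Rightarrow$ linear quotients (by the definition recorded after Theorem~\ref{thm:Eagon-ReinerVD}) $\Rightarrow$ linear resolution. Your extra care about purity of the Alexander dual (via Lemma~\ref{lem:AlexDualitySummary}(4)) is a sound, slightly more detailed justification for the final implication, but it does not change the route.
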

As mentioned in Example \ref{exa:EmtanderChordal}, there are clutters
such that every subclutter contains a complete-neighborhood vertex,
but that are not chordal. We can however use a similar technique to
show that clutters with a complete-neighborhood vertex in every induced
subclutter are vertex decomposable, improving the previous result
\cite[Theorem 4.3]{Emtander/Mohammadi/Moradi:2011} that such clutters
are shellable:
\begin{prop}
Let $\mathcal{C}$ be a $d$-uniform clutter such that every induced
subclutter has a complete-neighborhood vertex. Then $\alexdual{I(c_{d}(\mathcal{C}))}$
is vertex decomposable.\end{prop}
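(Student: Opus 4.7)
The plan is to mirror the proof of Theorem \ref{thm:dComplementChordal_VD}, inducting on $|V(\mathcal{C})|$ with a complete-neighborhood vertex $v$ in place of a simplicial vertex. The base cases and the link step transfer without change: when $\mathcal{C}$ has no circuits or equals the complete $d$-uniform clutter on $V$, the Alexander dual is either a skeleton of the simplex or the degenerate complex $\{\}$; and for the link, $\link_{\alexdual{I(c_{d}(\mathcal{C}))}} v = \alexdual{I(c_{d}(\mathcal{C} \setminus v))}$ is vertex decomposable by induction, since $\mathcal{C} \setminus v$ still satisfies the complete-neighborhood hypothesis. Because every complete-neighborhood vertex is simplicial (Example \ref{exa:EmtanderChordal}), Lemma \ref{lem:dComplementShedding} still makes $v$ a shedding vertex as soon as $\mathcal{C} \setminus v$ admits a $d$-non-circuit; the opposite case reduces to showing that $\alexdual{I(c_{d}(\mathcal{C}))} \setminus v$ is itself vertex decomposable, since then $v$ is in no facet of $\alexdual{I(c_{d}(\mathcal{C}))}$.

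This deletion step is the main obstacle, since the recursion on $\mathcal{C}/v$ used in Theorem \ref{thm:dComplementChordal_VD} is unavailable here: Emtander's hypothesis need not be preserved by contractions. Instead I would exploit the rigid structure of $\mathcal{C}/v$ when $v$ is complete-neighborhood. Write $S$ for the neighborhood of $v$, set $W = V \setminus \{v\}$, and let $U = W \setminus S$. The circuits of $\mathcal{C}$ through $v$ are exactly the sets $\{v\} \cup T$ for $T \in \binom{S}{d-1}$, so the $(d-1)$-circuits of $\mathcal{C}/v$ are precisely $\binom{S}{d-1}$. Combined with Lemma \ref{lem:dComplementContraction}, this yields
\[
\alexdual{I(c_{d}(\mathcal{C}))} \setminus v \;=\; \alexdual{I(c_{d-1}(\mathcal{C}/v))} \quad\text{with}\quad c_{d-1}(\mathcal{C}/v) \;=\; \binom{W}{d-1} \setminus \binom{S}{d-1},
\]
and a direct unpacking shows that $\tau \subseteq W$ is a face of $\alexdual{I(c_{d-1}(\mathcal{C}/v))}$ if and only if $|\tau| \leq |V| - d$ and $U \not\subseteq \tau$.

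This last description identifies $\alexdual{I(c_{d-1}(\mathcal{C}/v))}$ as the $(|V|-d-1)$-skeleton of the join $\partial \Delta_U * \Delta_S$, where $\Delta_X$ denotes the full simplex on $X$ and $\partial$ its boundary (with the degenerate conventions $\partial \Delta_{\emptyset} = \{\}$ and $\partial \Delta_{\{u\}} = \{\emptyset\}$). Since $\partial \Delta_U$ is the $(|U|-2)$-skeleton of the simplex $\Delta_U$, Lemma \ref{lem:VertDecSkeleton} makes it vertex decomposable; $\Delta_S$ is trivially so; Proposition \ref{pro:JoinOfKdecIsKdec} propagates vertex decomposability to the join; and a final application of Lemma \ref{lem:VertDecSkeleton} yields the same for the required skeleton. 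With this deletion step settled, the inductive argument completes exactly as in Theorem \ref{thm:dComplementChordal_VD}.
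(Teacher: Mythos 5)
Your proof is correct and follows essentially the same route as the paper: the induction, the use of the simpliciality of a complete-neighborhood vertex with Lemma \ref{lem:dComplementShedding}, and above all the explicit computation of $c_{d-1}(\mathcal{C}/v)$ as $\binom{W}{d-1}\setminus\binom{N}{d-1}$ to handle the deletion step (the step that fails to be inductive because Emtander's hypothesis is not closed under contraction). The only variation is cosmetic and comes at the very end: the paper identifies $\alexdual{I(c_d(\mathcal{C})/v)}$ as a pure skeleton of the complex on $V\setminus\{v\}$ with single non-face $V\setminus(\{v\}\cup N)$, whose facets are codimension-$1$ faces of a simplex, citing Eagon--Reiner for vertex decomposability and then Lemma \ref{lem:VertDecSkeleton}; you describe the same complex as a skeleton of $\partial\Delta_U * \Delta_S$ and instead invoke Proposition \ref{pro:JoinOfKdecIsKdec} together with Lemma \ref{lem:VertDecSkeleton}, which is the same content expressed slightly more self-containedly within the paper's own toolkit.
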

\begin{proof}
By induction we may assume that $\alexdual{\link_{\alexdual{I(c_{d}(\mathcal{C}))}}v=I(c_{d}(\mathcal{C}\setminus v))}$
is shellable. A complete-neighborhood vertex $v$ is simplicial, thus
either $v$ is a shedding vertex or else $\alexdual{I(c_{d}(\mathcal{C}))}=\alexdual{I(c_{d}(\mathcal{C}))}\setminus v$,
exactly as in the proof of Theorem \ref{thm:dComplementChordal_VD}.
It remains only to show that $\alexdual{I(c_{d}(\mathcal{C})/v)}$
is shellable. 

Let $N=\bigcup_{v\in e}(e\setminus\{v\})$ be the neighborhood of
$v$. The induced subclutter on $N$ is $\mathcal{K}_{\vert N\vert}^{d}$.
By Lemma \ref{lem:dComplementContraction}, $\alexdual{I(c_{d}(\mathcal{C})/v)}$
has circuits $\{e\,:\,\vert e\vert=d-1,\, e\cup\{v\}\mbox{ a non-circuit of }\mathcal{C}\}$,
that is, all $e$ of cardinality $d-1$ such that $e\not\subseteq N$.

It follows that $\alexdual{I(c_{d}(\mathcal{C})/v)}$ is the pure
$\vert V\vert-d-2$ skeleton of the complex $\Delta$ on $V\setminus\{v\}$
with the single non-face $V\setminus(\{v\}\cup N)$. The facets of
$\Delta$ are a collection of codimension 1 faces of a simplex, hence
$\Delta$ \cite[proof of Proposition 8]{Eagon/Reiner:1998} and by
Lemma \ref{lem:VertDecSkeleton} $\alexdual{I(c_{d}(\mathcal{C})/v)}$
are vertex decomposable.\end{proof}
\begin{cor}
\emph{\label{cor:dComplementCompleteN_LinRes}(Emtander \cite[Theorem 4.1]{Emtander:2010})
}Let $\mathcal{C}$ be a $d$-uniform clutter such that every induced
subclutter has a complete-neighborhood vertex. Then the circuit ideal
of $c_{d}(\mathcal{C})$ has linear quotients, hence a linear resolution.
\end{cor}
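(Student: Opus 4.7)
The plan is to observe that this corollary is a direct consequence of the proposition immediately preceding it, together with the chain of implications already established in the section between vertex decomposability of an Alexander dual and the existence of a linear resolution of the corresponding circuit ideal.

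First, the preceding proposition already establishes that under the hypothesis (every induced subclutter of $\mathcal{C}$ has a complete-neighborhood vertex), the complex $\alexdual{I(c_{d}(\mathcal{C}))}$ is vertex decomposable. So no new combinatorial work on $\mathcal{C}$ itself is needed here; I can take this as the starting point.

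Next, I would invoke two general facts, both available to us: (i) every vertex decomposable complex is shellable (this is Theorem \ref{thm:dDimlShellableIsdDecomp} combined with the trivial observation that every $0$-decomposable complex is $k$-decomposable for $k \geq 0$, so in particular it is $d$-decomposable in the dimension of the complex); and (ii) as recalled in the text immediately following Theorem \ref{thm:Eagon-ReinerVD}, the circuit ideal of a simplicial complex $\Delta$ has linear quotients precisely when $\alexdual{\Delta}$ is shellable. Applying (i) and (ii) with $\Delta = I(c_{d}(\mathcal{C}))$ shows that the circuit ideal of $c_{d}(\mathcal{C})$ has linear quotients. Finally, having linear quotients implies possessing a linear resolution, either by the standard commutative-algebra argument or, more in the spirit of this section, by noting that shellable complexes are Cohen-Macaulay and then applying Theorem \ref{thm:Eagon-ReinerLinearRes} with the Alexander dual pair.

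There is essentially no obstacle to overcome: all of the substantive content lives in the preceding proposition, where the combinatorial work of producing a shedding vertex (or recognizing the skeleton-of-a-codimension-one-subcomplex situation when a complete-neighborhood vertex is contained in every circuit of $c_d(\mathcal{C})$) is carried out. The corollary is just a translation of that topological-combinatorial statement into algebraic language via Alexander duality, so the proof can be presented in a single sentence citing the proposition, Theorem \ref{thm:dDimlShellableIsdDecomp}, and Theorem \ref{thm:Eagon-ReinerLinearRes}. The only thing worth double-checking is that the hypothesis of the Eagon--Reiner framework is satisfied in the degenerate cases (when $c_d(\mathcal{C})$ has no circuits, or when the complex $\alexdual{I(c_d(\mathcal{C}))}$ is $\{\}$ or $\{\emptyset\}$), but these degenerate cases were already handled as base cases in the proof of Theorem \ref{thm:dComplementChordal_VD} and reappear here without change.
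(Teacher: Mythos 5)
Your proposal is correct and matches the paper's (implicit) reasoning: the corollary is a direct consequence of the preceding proposition, via vertex decomposable $\Rightarrow$ shellable $\Rightarrow$ linear quotients (by the definition following Theorem \ref{thm:Eagon-ReinerVD}) $\Rightarrow$ linear resolution, exactly paralleling how Corollary \ref{cor:dComplementChordal_LinRes} follows from Theorem \ref{thm:dComplementChordal_VD}. One small point worth being explicit about in your alternative route through Theorem \ref{thm:Eagon-ReinerLinearRes}: shellable implies Cohen-Macaulay only for pure complexes, but here $\alexdual{I(c_d(\mathcal{C}))}$ is indeed pure because $c_d(\mathcal{C})$ is $d$-uniform (Lemma \ref{lem:AlexDualitySummary}(4)), so the argument goes through.
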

Our complement operation $c_{d}$ outputs a $d$-uniform clutter,
even if the starting clutter was not uniform. If $\mathcal{C}$ is
not $d$-uniform, then $\alexdual{I(\mathcal{C})}$ is not pure, hence
not Cohen-Macaulay; but it could still be sequentially Cohen-Macaulay.
Algebraically, this corresponds with the circuit ideal being \emph{component-wise
linear} \cite{Herzog/Hibi:1999}. It might be interesting to find
an extension of Theorem \ref{thm:dComplementChordal_VD} involving
non-uniform clutters and component-wise linear circuit ideals.

\section{\label{sec:ForbiddenMinors}Forbidden minors}

\subsection{Obstructions to shellability}

Wachs defined an \emph{obstruction to shellability} to be a non-shellable
simplicial complex such that every induced subcomplex is shellable.
The obstructions to shellability that are \emph{flag complexes} (independence
complexes of graphs) were recently classified: Francisco and Van Tuyl
\cite{Francisco/VanTuyl:2007} showed that chordal graphs are sequentially
Cohen-Macaulay and that the $n$-cycle is an obstruction to shellability
for $n\neq3,5$. The author \cite{Woodroofe:2009a} showed that every
complex containing no such cycle is shellable. We see a close relationship
between the obstructions to shellability in flag complexes and the
forbidden subgraphs of a chordal graph.

It is easier to study obstructions to shellability in the special
case of flag complexes for at least two reasons. The first is that
graphs are better studied than clutters, and so there were pre-existing
theorems relating the forbidden subgraphs characterization of chordal
graphs to the simplicial vertex characterization. The second is that
every link in a flag complex can be expressed as an induced subgraph:
$\link_{I(G)}v=I(G\setminus N[v])$. We try to partially remedy the
latter with the following alternate definition:
\begin{defn}
A complex $\Delta$ is a \emph{$dc$-obstruction to shellability}
if $\Delta$ is non-shellable, but both every induced subcomplex and
every link are shellable. Here $dc$ stands for ``deletion-contraction''.
A non-shellable complex $\Delta$ such that $\link_{\Delta}v$ is
shellable for every $v\in V(\Delta)$ is a \emph{$c$-obstruction
to shellability}, and an obstruction to shellability in the sense
of Wachs is a \emph{$d$-obstruction to shellability}. \end{defn}
\begin{example}
The complex $\Delta$ with facets $\left\{ \{1,2,3\},\{3,4,5\},\{1,5\}\right\} $
is a $d$-obstruction but not a $dc$-obstruction to shellability,
since deleting any vertex leaves a connected complex with a single
$2$-face, but $\link_{\Delta}3$ is two disconnected edges, hence
not shellable. Similarly for the family constructed in \cite[Proposition 1]{Wachs:1999b}.
\end{example}
Since $d$-obstructions to shellability allow the possibility of complexes
where non-shellability is controlled by a proper (non-induced) subcomplex,
we regard the definition of $dc$-obstructions to shellability as
somewhat more natural. We comment that every pure $c$-obstruction
to shellability is a Buchsbaum complex, and that conversely a Buchsbaum
complex could be thought of as a pure ``$c$-obstruction to Cohen-Macaulay.'' 

Wachs conjectured \cite{Wachs:1999b} that there are a finite number
of $k$-dimensional $d$-obstructions to shellability for any fixed
$k$. Hachimori and Kashiwabara \cite[Theorem 4.6]{Hachimori/Kashiwabara:2011}
have recently shown that there are a finite number of $d$-obstructions
in dimension $k$ if and only if there are a finite number of $dc$-obstructions
in dimension $k$ --- in particular, replacing ``$d$-obstructions''
with ``$dc$-obstructions'' in the conjecture of Wachs leaves an
equivalent conjecture.

\subsection{Examples of forbidden minors}

A \emph{forbidden subclutter} of some family $\mathcal{F}$ of clutters
is a clutter $\mathcal{C}$ not in $\mathcal{F}$ such that every
induced subclutter is in $\mathcal{F}$. A \emph{forbidden minor}
of some family $\mathcal{F}$ of clutters is a clutter $\mathcal{C}$
that is not in $\mathcal{F}$, but such that every minor (obtained
by both deletion and contraction) is in $\mathcal{F}$. 

Every nonshellable forbidden subclutter to chordality is a $d$-obstruction
to shellability, while every nonshellable forbidden minor to chordality
is a $dc$-obstruction to shellability (since these two families are
the forbidden subclutters and forbidden minors to the family of clutters
with every subclutter shellable). Thus, an approach to the obstructions
to shellability problem is to understand the forbidden minors to chordal
clutters. 

For the case where $\mathcal{C}$ is a graph, we know that the forbidden
minors to chordality are exactly $C_{n}$ for $n\geq4$. The situation
with general clutters is open, but it seems quite reasonable to ask
whether every $dc$-obstruction to shellability is also a forbidden
minor to chordality. We present several examples of infinite families
of forbidden minors which are both. The hope is that a good understanding
of these forbidden minors could lead to deeper understanding (or even
a classification in the style of \cite{Woodroofe:2009a}) of obstructions
to shellability. 
\begin{example}
\label{exa:CyclicUniformClutter} Let $\mathcal{Z}_{n}^{k}$ be the
clutter with vertex set $\mathbb{Z}_{n}$ and circuits consisting
of every $k$ consecutive elements. Thus, $\mathcal{Z}_{n}^{2}\cong C_{n}$,
and more generally $\mathcal{Z}_{n}^{k}$ are the obvious $k$-uniform
extension of the cyclic graphs. Any vertex (hence every vertex) of
$\mathcal{Z}_{n}^{k}$ is simplicial if and only if $k=n$ or $n-1$,
so $\mathcal{Z}_{n}^{k}$ is not chordal unless $k=n$ or $n-1$.
Deleting any vertex leaves a clutter with the free vertex property,
so $\mathcal{Z}_{n}^{k}$ ($k\neq n,n-1$) are forbidden subclutters
to chordality. In some cases, for example $\mathcal{Z}_{5}^{3}$,
they may also be forbidden minors.
\end{example}
We take a brief detour to discuss some cases when $\mathcal{Z}_{n}^{k}$
is not a forbidden minor to chordality, i.e., when $\mathcal{Z}_{n}^{k}$
has a non-chordal contraction.
\begin{lem}
If $\ell k\leq n\leq\ell(k+1)$ and $k>2$, then $\mathcal{Z}_{n}^{k}$
has a contraction isomorphic to $\mathcal{Z}_{n-\ell}^{k-1}$.\end{lem}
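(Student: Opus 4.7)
The plan is to select $\ell$ vertices of $\mathcal{Z}_n^k$ whose successive contraction yields $\mathcal{Z}_{n-\ell}^{k-1}$. The hypothesis $\ell k\leq n\leq\ell(k+1)$ is exactly the condition that one may choose cyclic gap sizes $g_1,\dots,g_\ell\in\{k,k+1\}$ with $g_1+\cdots+g_\ell=n$: take $a=\ell(k+1)-n$ gaps of size $k$ and $b=n-\ell k$ gaps of size $k+1$. Set $v_1=0$ and $v_{j+1}=v_j+g_j$ in $\mathbb{Z}_n$, and contract $v_1,\dots,v_\ell$; I claim the resulting clutter is $\mathcal{Z}_{n-\ell}^{k-1}$ on the cyclic order inherited by $\mathbb{Z}_n\setminus\{v_1,\dots,v_\ell\}$.

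Two geometric observations drive the verification. Since every $g_j\geq k$, no $k$-circuit of $\mathcal{Z}_n^k$ contains two chosen vertices; since every $g_j\leq k+1$, the only $k$-circuits avoiding all chosen vertices are ``block'' circuits filling some size-$(k+1)$ gap (one per such gap). The $k$ circuits through a fixed $v_j$ contract to $(k-1)$-sets of consecutive non-chosen vertices; as the starting position of the $k$-window slides through $v_j$, these sets form $k$ consecutive $(k-1)$-windows in the non-chosen cyclic order, centered on the slot of $v_j$. At each size-$k$ gap boundary, the rightmost such window from $v_j$, namely $\{v_j+1,\dots,v_j+k-1\}$, coincides with the leftmost from $v_{j+1}$, namely $\{v_{j+1}-k+1,\dots,v_{j+1}-1\}$, producing exactly one duplicate per such gap. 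A count then gives $\ell k - a = n - \ell$ distinct $(k-1)$-circuits, which is precisely the full set of $(k-1)$-windows in the cyclic order on the $n-\ell$ non-chosen vertices.

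Finally, minimality eliminates the $b$ surviving $k$-sets: each fills a size-$(k+1)$ gap between some $v_j$ and $v_{j+1}$, and properly contains the $(k-1)$-circuit $\{v_j+1,\dots,v_j+k-1\}$ (obtained by contracting $v_j$ from the $k$-window starting at $v_j$). What remains is exactly the clutter of all $(k-1)$-windows in the cyclic order of the non-chosen vertices, which under the natural relabeling is $\mathcal{Z}_{n-\ell}^{k-1}$. The main technical obstacle is the bookkeeping between the three types of contributions --- $(k-1)$-circuits from each $v_j$, duplicate pairs at size-$k$ boundaries, and redundant $k$-circuits at size-$(k+1)$ gaps; the hypothesis $k>2$ avoids the degenerate case $k-1=1$ in which ``circuits'' of the contraction would be singleton vertices.
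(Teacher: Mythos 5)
Your proof is correct and takes essentially the same approach as the paper: choose $\ell$ vertices with consecutive cyclic gaps of size $k$ or $k{+}1$ (equivalently, $k{-}1$ or $k$ vertices between consecutive chosen ones) and contract them. The paper states only that "contracting $S$ is easily seen to give $\mathcal{Z}_{n-\ell}^{k-1}$"; your write-up carries out exactly this verification, correctly accounting for the duplicate $(k-1)$-windows at size-$k$ gaps and the redundant $k$-sets at size-$(k+1)$ gaps. (Your counting argument tacitly assumes $n > k$ so that $\mathcal{Z}_n^k$ has $n$ distinct circuits; the excluded case $\ell=1$, $n=k$ is trivial and verified directly.)
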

\begin{proof}
The condition allows us to pick a set $S=\{v_{1},\dots,v_{\ell}\}$
vertices from $\mathbb{Z}_{n}=V(\mathcal{Z}_{n}^{k})$ so that every
2 vertices in $S$ have $k$ or $k-1$ vertices between them. Contracting
$S$ is easily seen to give $\mathcal{Z}_{n-\ell}^{k-1}$ as a minor.\qedhere

E.g., $\mathcal{Z}_{6}^{3}$ is not a forbidden minor to chordality,
since it contains a contraction minor isomorphic to the cyclic graph
$C_{4}$ ($=\mathcal{Z}_{4}^{2}$).
\end{proof}
More broadly, we could consider ``clutters of cyclic type'': clutters
on vertex set $\mathbb{Z}_{n}$ with all circuits consisting of consecutive
elements (possibly of different cardinalities). The next two examples,
however, show that not all forbidden minors have this form; moreover,
the results of Section \ref{sub:ComputationalResults} suggest that
such a form is relatively uncommon.

\begin{example}
\label{exa:DeletedCrossPolytope}Let $\mathcal{X}_{n}$ be the clutter
with vertex set $[2n]$ and circuits $\{\mbox{odd vertices}\}$, $\{\mbox{even vertices}\}$,
and $\{i,i+1\}$ for all odd $i$. By symmetry no vertex is simplicial,
and deleting or contracting any vertex leaves the same clutter (up
to isomorphism). Any such deletion or contraction removes one of the
two circuits with cardinality $n$, leaving a clutter with the free
vertex property. Thus $\mathcal{X}_{n}$ is a forbidden minor to chordality
for $n>1$.

The independence complex $I(\mathcal{X}_{n})$ is the boundary complex
of the $(n-1)$-dimensional cross-polytope with two opposing facets
removed, a non-shellable complex. Hence $I(\mathcal{X}_{n})$ is an
$dc$-obstruction to shellability for $n>1$.
\end{example}

\begin{example}
\label{exa:2Facets} Let $\mathcal{Y}_{n}$ be the clutter with vertex
set $[2n]$ and circuit set consisting of all $n$-sets except for
$\{1,\dots,n\}$ and $\{n+1,\dots,2n\}$. It is straightforward to
verify that every minor of $\mathcal{Y}_{n}$ is either a complete
uniform clutter, or else a complete uniform clutter with one circuit
removed. We notice that any vertex in the removed circuit of a latter
such minor is simplicial, hence every proper minor of $\mathcal{Y}_{n}$
is chordal.

The independence complex $I(\mathcal{Y}_{n})$ consists of all $(n-2)$
and lower dimensional faces, together with two disjoint $(n-1)$-faces.
As the pure $(n-1)$-skeleton is disconnected, we have for $n>1$
that $I(\mathcal{Y}_{n})$ is a $dc$-obstruction to shellability
and $\mathcal{Y}_{n}$ is a forbidden minor to chordality.
\end{example}
\begin{sidewaystable}
\caption{\label{tab:ObstructionsOn6Vertices}$dc$-obstructions to shellability
on 6 vertices}
\begin{longtable}{|l|>{\raggedright}p{3.9in}|>{\raggedright}p{2.6in}|l|}
\hline 
\noalign{\vskip\doublerulesep}
 &
Clutter of minimal non-faces &
Independence complex  &
Top skel.\tabularnewline[\doublerulesep]
\hline 
\noalign{\vskip\doublerulesep}
1.$\quad$ &
12, 13, 24, 35, 46, 56  &
145, 16, 236, 25, 34  &
$S^{0}$ \tabularnewline[\doublerulesep]
2. &
12, 13, 14, 235, 236, 245, 246, 256, 345, 346, 356, 456  &
156, 234, 25, 26, 35, 36, 45, 46  &
$S^{0}$ \tabularnewline[\doublerulesep]
3. &
12, 13, 14, 235, 236, 245, 256, 345, 356, 46  &
156, 234, 25, 26, 35, 36, 45  &
$S^{0}$ \tabularnewline[\doublerulesep]
4. &
12, 13, 14, 235, 236, 256, 356, 45, 46  &
156, 234, 25, 26, 35, 36  &
$S^{0}$ \tabularnewline[\doublerulesep]
5. &
12, 13, 14, 235, 246, 256, 36, 45  &
156, 234, 25, 26, 35, 46  &
$S^{0}$ \tabularnewline[\doublerulesep]
6. &
12, 13, 145, 146, 235, 236, 245, 246, 256, 345, 346, 356, \\
$\quad$456  &
14, 156, 234, 25, 26, 35, 36, 45, 46  &
$S^{0}$ \tabularnewline[\doublerulesep]
7. &
12, 13, 145, 146, 235, 245, 246, 256, 345, 36, 456  &
14, 156, 234, 25, 26, 35, 45, 46  &
$S^{0}$ \tabularnewline[\doublerulesep]
8. &
12, 13, 145, 146, 245, 26, 346, 35, 456  &
14, 156, 234, 25, 36, 45, 46  &
$S^{0}$ \tabularnewline[\doublerulesep]
9. &
12, 13, 145, 234, 236, 245, 246, 345, 346, 56  &
146, 15, 235, 24, 26, 34, 36, 45  &
$S^{0}$ \tabularnewline[\doublerulesep]
10. &
12, 13, 145, 236, 24, 345, 346, 56  &
146, 15, 235, 26, 34, 36, 45  &
$S^{0}$ \tabularnewline[\doublerulesep]
11. &
12, 13, 145, 24, 345, 36, 56  &
146, 15, 235, 26, 34, 45  &
$S^{0}$ \tabularnewline[\doublerulesep]
12. &
12, 13, 234, 235, 245, 345, 46, 56  &
145, 16, 236, 24, 25, 34, 35  &
$S^{0}$ \tabularnewline[\doublerulesep]
13. &
12, 134, 135, 136, 145, 146, 235, 236, 245, 246, 256, \\
$\quad$345, 346, 356, 456  &
13, 14, 156, 234, 25, 26, 35, 36, 45, \\
$\quad$46  &
$S^{0}$ \tabularnewline[\doublerulesep]
14. &
12, 134, 135, 136, 145, 234, 236, 245, 246, 345, 346, 56  &
13, 146, 15, 235, 24, 26, 34, 36, 45  &
$S^{0}$ \tabularnewline[\doublerulesep]
15. &
12, 134, 135, 146, 235, 246, 256, 36, 45  &
13, 14, 156, 234, 25, 26, 35, 46  &
$S^{0}$ \tabularnewline[\doublerulesep]
16. &
123, 124, 125, 126, 134, 135, 136, 145, 146, 235, 236,\\
$\quad$245, 246, 256, 345, 346, 356, 456  &
12, 13, 14, 156, 234, 25, 26, 35, 36, \\
$\quad$45, 46  &
$S^{0}$ \tabularnewline[\doublerulesep]
17. &
12, 134, 135, 146, 235, 246, 256, 345, 346, 356, 456  &
136, 145, 156, 234, 236, 245, 35, 46  &
$S^{1}$ \tabularnewline[\doublerulesep]
18. &
12, 134, 135, 234, 246, 345, 346, 56  &
136, 145, 146, 235, 236, 245, 34  &
$S^{1}$ \tabularnewline[\doublerulesep]
19. &
12, 134, 256, 35, 46  &
136, 145, 156, 234, 236, 245  &
$S^{1}$ \tabularnewline[\doublerulesep]
20. &
123, 124, 125, 126, 134, 135, 146, 235, 246, 256, 345, \\
$\quad$346, 356, 456  &
12, 136, 145, 156, 234, 236, 245, 35, \\
$\quad$46  &
$S^{1}$ \tabularnewline[\doublerulesep]
21. &
1234, 1235, 1246, 1356, 2456, 3456  &
1236, 1245, 1256, 1345, 1346, 1456, \\
$\quad$2345, 2346, 2356  &
$S^{2}$ \tabularnewline[\doublerulesep]
\hline 
\end{longtable}
\end{sidewaystable}

We note that of the 2-dimensional $dc$-obstructions to shellability
$M_{5}$, $M_{6}$, and $M_{7}$ that were considered by Wachs \cite[Lemma 5]{Wachs:1999b},
we have $M_{5}\cong I(\mathcal{Z}_{5}^{3})$, $M_{6}\cong I(\mathcal{X}_{3})$,
and $M_{7}\cong I(C_{7})=I(\mathcal{Z}_{7}^{2})$.

\subsection{\label{sub:ComputationalResults}Computational results}

Computation with GAP \cite{GAP4.4.12} yields exactly two forbidden
minors to chordality on $5$ vertices: the cyclic graph $C_{5}$ and
the clutter $\mathcal{Z}_{5}^{3}$ discussed in Example \ref{exa:CyclicUniformClutter}.
Both have homotopy type $S^{1}$. $I(\mathcal{Z}_{5}^{3})$ is a $dc$-obstruction
to shellability, while $I(C_{5})$ is shellable.

On 6 vertices, a similar computation yields 294 (isomorphism classes
of) forbidden minors to chordality on $6$ vertices (out of 16,353
non-isomorphic 6 vertex clutters). There are an additional 96 clutters
containing a $C_{5}$ minor (but no other non-chordal minor), all
96 of which are shellable. Of the 294 forbidden minors to chordality,
273 are shellable and $21$ are not. The shellable forbidden minors
to chordality on 6 vertices are too numerous to print here --- a complete
list and source code are available on my web page, currently at \url{http://www.math.wustl.edu/~russw}.

The $21$ non-shellable forbidden minors to chordality are the $dc$-ob\-structions
to shellability on $6$ vertices. These clutters and their independence
complexes are summarized in Table \ref{tab:ObstructionsOn6Vertices}.
The clutters and simplicial complexes are written in compact notation,
so that, for example, $12$ represents the set $\{1,2\}$. The fourth
column of Table \ref{tab:ObstructionsOn6Vertices} represents the
homotopy type of the pure top dimensional skeleton, as computed by
automatic collapsing of free faces. Since each is a sphere of lower
dimension than the top dimensional face, we see that none of these
complexes are sequentially Cohen-Macaulay.

We notice that the first 16 rows of the table represent simplicial
complexes consisting of two disjoint $2$-faces, with enough edges
between them to prevent a non-shellable minor. Line 1 represents the
cylic graph $C_{6}$ and its independence complex, and Line 16 is
isomorphic to the clutter $\mathcal{Y}_{3}$ discussed in Example
\ref{exa:2Facets}. Lines 17 to 20 represent simplicial complexes
consisting of an anulus formed by six $2$-facets, together with some
additional $1$-dimensional facets. Line 19 is isomorphic to the clutter
$\mathcal{X}_{3}$ discussed in Example \ref{exa:DeletedCrossPolytope}.
Line 21 is isomorphic to the clutter $\mathcal{Z}_{6}^{4}$ discussed
in Example \ref{exa:CyclicUniformClutter}, via the ordering of vertices
1, 2, 4, 6, 5, 3. Lines 1 and 21 are the only clutters of cyclic type.

The computation took several hours on a 2.4 Ghz MacBook, and involved
enumerating over all 6 vertex clutters. The main technical difficulty
was that computationally proving a complex to be non-shellable is
very slow. However, checking for a simplicial vertex and checking
for 4 or 5 vertex obstructions as minors are both fast, and give a
short list of complexes to check for shellability. Indeed, as the
non-shellable forbidden minors to chordality all had negative entries
in their $h$-triangle, it was only necessary to find shellings for
the shellable forbidden minors.
\begin{rem}
Hachimori and Kashiwabara \cite{Hachimori/Kashiwabara:2011} have
more recently used non-computational methods to classify all 2-dimensional
$d$-obstructions to shellability. 
\end{rem}
\bibliographystyle{hamsplain}
\bibliography{3_Users_russw_Documents_Research_Master}

\end{document}